\def\dif{\mathrm{d}}
\def\diag{\text{diag}}
\newcommand{\Sympp}{\mathbb{S}^{++}} % symmetric pos definite
\theoremstyle{plain}
\newtheorem{theorem}{Theorem}[section]
\newtheorem{lemma}[theorem]{Lemma}
\newtheorem{proposition}[theorem]{Proposition}
\newtheorem{definition}[theorem]{Definition}
\newtheorem{assumptionletter}{{\textbf{Assumption}}}
\theoremstyle{definition}
\newtheorem{remark}[theorem]{Remark}
\numberwithin{equation}{section}
\titleformat{\paragraph}{\normalfont\bfseries}{\theparagraph}{}{}
\titlespacing*{\paragraph}{0pt}{3.25ex plus 1ex minus .2ex}{0.1em}
\title{Cointegrated Continuous-time Linear \\[2mm] State Space and MCARMA Models}
\author{Vicky Fasen-Hartmann \setcounter{footnote}{1}\thanks{Institute of Stochastics, Englerstra{\ss}e 2,
D-76131 Karlsruhe, Germany.} \label{fnote}%\emph{Email:}
%\href{mailto:vicky.fasen@kit.edu}{vicky.fasen@kit.edu}}
\thanks{Financial
support by the Deutsche Forschungsgemeinschaft through the research
grant FA 809/2-2 is gratefully acknowledged.} \and  Markus Scholz \thanks{Allianz Lebensversichung-AG, Reinsburgstra{\ss}e 19, D-70197 Stuttgart, Germany. }}
\date{}
\begin{document}
%
%%%%%%%%%%%%%%%%%  Titel und Autoreninformationen  %%%%%%%%%%%%%%%%%%%%%%%%%%%%%%%%
\maketitle
%
%%%%%%%%%%%%%%%%%  Abstract und AMS Classification  %%%%%%%%%%%%%%%%%%%%%%%%%%%%%%%
\begin{abstract} \vspace*{-1.4cm}
In this paper we define and characterize cointegrated solutions of continuous-time linear state-space models.
A main result is that a cointegrated solution of a continuous-time linear state-space model can be represented as a sum
of a Lévy process and a stationary solution of a linear state-space model. Moreover, we prove that the class of cointegrated multivariate Lévy-driven autoregressive moving-average (MCARMA) processes, the continuous-time
analogues of the classical vector ARMA processes, is equivalent to the class of cointegrated solutions of continuous-time linear state
space models. Necessary conditions for MCARMA processes to be cointegrated are given as well extending
the results of Comte \cite{Comte1999} for MCAR processes.
The conditions depend only on  the autoregressive polynomial if we have a minimal model. Finally, we investigate  cointegrated continuous-time
linear state-space models observed on a  discrete time-grid and calculate their linear innovations.
Based on the representation of the linear innovations we derive an error correction form. The error correction form
uses an infinite linear filter in contrast
to the finite linear filter for VAR models.
\end{abstract}

%\noindent
%\begin{tabbing}
%\emph{JEL Classification: } \=   C10, C32, C51
%\end{tabbing}
%\marginpar{abändern}

\noindent
\begin{tabbing}
%\emph{AMS Subject Classification 2010: }\=Primary:  62B10,  62F12, 62M86
%\\ \> Secondary:  62F10, 62M10
\emph{AMS Subject Classification 2010: }\=Primary: 60F99, 91B84
\\ \> Secondary:   62M10, 91G70
\end{tabbing}

\vspace{0.2cm}\noindent\emph{Keywords:} Canonical form, cointegration, error correction form, Granger representation theorem, Kalman filter, MCARMA process,  state-space model.

%%%%%%%%%%%%%%%%%  Einleitung  %%%%%%%%%%%%%%%%%%%%%%%%%%%%%%%%%%%%%%%%%%%%%%%%%
\section{Introduction}
%
%
%%%%%%%%%%%%%%%%%  Literaturverzeichnis  %%%%%%%%%%%%%%%%%%%%%%%%%%%

Many time series do not behave in a stationary way, e.g., financial time series. Hence, non-stationary models are of particular interest in order to model such behavior. One  class of non-stationary processes are cointegrated processes. A $d$-dimensional
stochastic process $(Y(t))_{t\geq t_0}$  without deterministic component  is called  \emph{integrated}
if $(Y(t))_{t\geq t_0}$  is non-stationary but has stationary increments.
If additionally there exists a vector $\beta\not=0$, $\beta\in\R^d$, such that $(\beta^\mathsf{T} Y(t))_{t\geq t_0}$ is stationary
then $(Y(t))_{t\geq t_0}$ is called
\emph{cointegrated} with cointegration vector $\beta$.
The number of linear independent cointegration relations is called \emph{cointegration rank}
and the space spanned by all linear independent cointegration vectors is called \emph{cointegrating space}.
Examples of cointegrated time series are, e.g.,  exchange rates, foreign
currency spot and futures/forwards rates, stock prices within an
industry and interest rates in different countries (cf. Brenner and
Kroner~\cite{BrennerKroner95} and references therein).
 It was Clive Granger who showed that statistical inference of such non-stationary time series with the classical stationary methodology can lead to inadequate results.
 The seminal works by Granger~\cite{GRANGER1981} in 1981  and Engle and Granger~\cite{EngleGranger1987} in 1987  lay the foundation for the field of cointegration analysis in discrete time.
 One of the most important results is the Granger representation theorem, which connects the moving average, autoregressive and error correction representations of cointegrated
 time series. Johansen \cite{Johansen1991,Johansen2009} characterizes cointegration for VAR (vector autoregressive) processes precisely by making assumptions on the autoregressive polynomial in the Johansen-Granger representation theorem.

In this paper we study  cointegrated solutions of continuous-time linear stochastic state-space models and cointegrated multivariate  continuous-time ARMA$(p,q)$ (MCARMA$(p,q)$) processes, respectively which are the natural extension of VARMA (vector autoregressive moving average) processes from discrete to continuous time. It is well known that the class of stationary solutions of linear state-space models and the class of stationary MCARMA processes
are equivalent; see Schlemm and Stelzer~\cite[Corollary 3.4]{SchlemmStelzer2012}.
Both model classes are driven by a Lévy process.
An $\R^m$-valued stochastic process $L=(L(t))_{t\geq 0}$ is a L\'evy process  if $L(0)=0_m$ $\mathbb{P}$-a.s., it has stationary and independent increments and c\`adl\`ag sample paths; see the excellent monograph of Sato~\cite{sato1999levy} for details on Lévy processes.
Then
an $\R^d$-valued \emph{continuous-time linear state-space model} $(A,B,C,L)$ of dimension $N\in\N$ is characterized by an $\R^m$-valued Lévy process, a
transition matrix $A\in \R^{N\times N}$, an input matrix $B\in \R^{N\times m}$ and an observation matrix  $C\in \R^{d\times N}$.
It consists of the state equation
\begin{align}
\notag
\dif X(t) &=  A X(t) \dif t + B \dif L(t)
\intertext{and the observation equation}
\label{defObservEqCoint}
Y(t) &= C X(t) \quad  \text{ for } t \geq t_0\geq0.
 \end{align}
The state vector process $X=(X(t))_{t \geq t_0}$ is an $\R^N$-valued process and the output process $Y=(Y(t))_{t \geq t_0}$ is $\R^d$-valued.
Every solution of \eqref{defObservEqCoint} has the representation
\begin{align*}
Y(t)=C\exp(A(t-t_0))X(t_0) +C\int_{t_0}^t\exp(A(t-u))B\, \dif L(u).
\end{align*}
%A solution $Y$ is called \emph{causal}, if for all $t\geq t_0$, $Y(t)$ is independent of the $\sigma$-algebra generated by
%$\{L(s)-L(t):s>t\}$.
On the other hand, the idea behind the definition of an $\R^d$-valued MCARMA$(p,q)$ process  ($p>q$ positive integers)
 is that it is the solution to the differential equation
 \begin{eqnarray*}
     {P}(D)Y(t)={Q}(D)D L(t) \quad \mbox{ for } t\geq t_0,
\end{eqnarray*}
where $D$ is the differential operator with respect to $t$,
$    {P}(z):= I_{d\times d}z^p+P_1z^{p-1}+\ldots+P_{p-1}z+P_p$
is the autoregressive polynomial with $P_1,\ldots, P_p\in \R^{d\times d}$ and
    ${Q}(z):=Q_0z^q+ Q_1z^{q-1}+\ldots+ Q_{q-1}z+Q_q$
 is the moving average polynomial with $Q_0,\ldots,Q_q\in \R^{d\times m}$. However, since a Lévy process is not differentiable
this is not the formal
definition of an MCARMA process. %The formal definition is given later by a state space model.

The aim of this paper  is to characterize cointegrated solutions of linear state-space models in continuous-time   and cointegrated MCARMA processes and to relate both model classes.
Cointegration in continuous time started being of interest in the early 1990s with Phillips \cite{Phillips1991}. In this work, Phillips investigated stochastic differential equations driven by a differentiable stationary process. The connection between cointegrated discrete-time models and continuous-time models were  analyzed by Chambers  \cite{Chambers1999619}. The literature on Gaussian MCAR$(1,0)$ processes is rich, e.g.,
 \cite{KesslerRahbek2001,litrahbek2004,litstockmarrjacobsen1994}. One of the first departures of the Gaussian assumption and the order (1,0) was Comte \cite{Comte1999}; he derived a characterization of  continuous-time integrated and cointegrated processes, and in particular, he presented an error correction form and a characterization of cointegration for MCARMA$(p,0)$ processes.   The processes considered in Fasen \cite{Fasen2013,Fasen2014} are special cases of cointegrated MCARMA processes.
In the discrete-time case they are a few papers dealing with cointegrated discrete-time state space models as
Aoki~\cite{Aoki90}, Aoki and Havenner~\cite{AokiHavenner91}, a series of papers by Bauer and Wagner~\cite{BauerWagner2002b,BauerWagner2012} and the preprint series of Ribaritis and Hanzon~\cite{RibaritsHanzon2011,RibaritsHanzon20112}; see also the overview article of Wagner~\cite{Wagner2010}.
All of the papers assume that the state space model is already in innovation form which is not the case in our setting.

The paper is structured on the following way. We start with an introduction into basic properties of linear state space models in \Cref{Section 2.1}. In \Cref{sec:CointMCARMA} we derive cointegrated solutions of linear state-space models in continuous-time
and calculate some probabilistic properties of them.
An important result is that a cointegrated solution of a linear state-space model has a representation
as a sum of a non-stationary and a stationary process;
the non-stationary process is a Lévy process and the stationary process is a stationary solution of a different linear state-space model.
This characterization we use as definition for cointegrated  state-space processes. Moreover, we present canonical forms for cointegrated state
space processes. Canonical forms are fundamental for statistical inference
 to get identifiable models.

Next, in \Cref{Section:3}, we investigate cointegrated MCARMA processes. We prove that the class
of cointegrated MCARMA processes and the class of cointegrated linear state-space processes are equivalent complementing
 \cite[Corollary 3.4]{SchlemmStelzer2012} to the non-stationary case.
Furthermore, we derive an alternative characterization of cointegration for MCARMA processes extending
the results of Comte \cite{Comte1999} for MCAR models to general MCARMA models. A conclusion is that
the property of cointegration of an MCARMA process is determined by
 the  matrices $P_p$ and $P_{p-1}$ of the autoregressive polynomial if the model is minimal.

The last section,  \Cref{sec:KalmanFilterforCointegratedProcesses},  studies  cointegrated solutions of
linear state-space models sampled at a discrete-time grid. They are of particular interest in high-frequency data where the data
are modelled by a continuous-time process observed discretely.
First, we derive for the sampled process $Y^{(h)}=(Y(nh))_{n\geq n_0}$, $h>0$ fixed, a state space representation in discrete time. For statistical inference
a drawback of this representation is that the state process is not observable. Hence, we derive an innovation representation
of the sampled process by an application of the Kalman filter. %  whose name  dates back to Rudolf E. Kalman~\cite{kalman1960}.
Since we are in a non-stationary setting the choice of the starting value of the Kalman filter is not obvious.
Having this starting value we receive a very nice representation of the linear innovations and then also an error correction form.
  The derived error correction form is similar to the original error correction form presented by Engle and Granger \cite{EngleGranger1987} for VAR models. The main difference is that we have an infinite order linear filter instead of a finite linear filter.
The meaning of an infinite linear filter is not as obvious for the non-stationary process $Y$ since $Y(t)$  is only defined for $t\geq t_0$
and hence, $Y(nh)$ is only defined for $n\geq n_0= \lceil t_0/n\rceil$. Thus, we have to extend the definition of $Y(nh)$ to $n\leq n_0$ so that
the infinite linear filter makes sense.
 We show that the cointegration information is contained in parts of the filter and is thus not lost by  sampling and filtering. % This is a crucial property for the estimation of the cointegration parameters from discrete time observations (cf. \cite{Fasen:Scholz:2016b}).
The error correction form enables us to analyze the short-run   as well as the long-run behaviour of $Y^{(h)}$.
It provides the basis for  quasi-maximum-likelihood estimation of cointegrated state space process as presented in  Fasen and Scholz \cite{Fasen:Scholz:2016b}.
An outlook is given in \Cref{Outlook}.
%Furthermore, we derive  probabilistic properties of the linear innovations.

\subsubsection*{Notation} \vspace*{-0.3cm}

We use as norms the Euclidean norm $ \lVert\cdot \rVert$ in $\R^d$
and the Frobenius norm $ \lVert\cdot \rVert$ for
matrices, which is submultiplicative. $\Re(z)$ denotes the real part of a complex number $z\in\C$.
The matrix $0_{d\times s}$
is the zero matrix in $\R^{d\times s}$ and $I_{d}$ is the
identity matrix in $\R^{d\times d}$.
 For a matrix $A\in\R^{d\times d}$ we denote by $A^\mathsf{T}$ its transpose,
 by $\det(A)$ its determinant, by $\sigma(A)$ the set of all eigenvalues of $A$, by $\rank A$ its rank and by $\lambda_{\text{max}}(A)$ its largest eigenvalue.
For a matrix  $A \in \R^{d \times s}$ with $\rank~A=s$, $A^{\perp}$ is a $d\times (d-s)$ matrix with rank $(d-s)$ satisfying $A^\mathsf{T}A^{\perp}=0_{s\times(d-s)}$ and $A^{\perp \mathsf{T}}A=0_{(d-s)\times s}$. For two matrices $A,B$ we write $\diag(A,B)$ for a block diagonal matrix whose first block is the matrix $A$ and the second
block is the matrix $B$.
The space of all $m\times n$ real-valued matrices is denoted with $M_{m,n}(\R)$,  the set of $m$-dimensional symmetric positive-definite matrices is denoted by $\Sympp_m$
and $GL_N(\R):=\{A\in\R^{N\times N}:\det~A\not=0\}$ for some $N\in\N$.
Throughout this paper we will always assume
\begin{assumptionletter}
\label{ALevy}
  The Lévy process $L$ satisfies $\E L(1)=0_m$ and $\E\|L(1)\|^2<\infty$ and the covariance matrix  $\Sigma_L=\E[L(1)L(1)^\mathsf{T}]$ is non-singular.
\end{assumptionletter}
We extend the definition of a Lévy process on $\R$ by taking an independent copy $\wt L$ of $L$ and defining $L(t)=\wt L({-t-})$ for $t<0$.

\section{Preliminaries} \label{Section 2.1}

First, we present definitions related to linear state-space models which we  need subsequently. These definitions enable us to imply restrictions on the state-space model in order to achieve uniqueness in the output process and to define a cointegrated model. The definitions hold for both discrete-time as well as continuous-time state space models; see
\cite{Caines,hannandeistler2012}  for discrete-time models and \cite{Bernstein2009,Kailath,Sontag} for continuous-time models.

\begin{definition}
\label{defMinimalSSMcont}
Let $A\in M_{N,N}(\R)$, $B\in M_{N,m}(\R)$ and $C\in M_{d,N}(\R)$.
The matrix triple $(A,B,C)$ is called an \textbf{algebraic realization} of a rational matrix function $k\in M_{d,m}(\R\{z\})$ of dimension $N$ if $k(z)=C(zI_N-A)^{-1}B$.
The function $k:~z\mapsto C(zI_N-A)^{-1}B$ is called \textbf{transfer function} of the triple $(A,B,C)$. The triple
  $(A,B,C)$ is called \textbf{minimal} if there exists no other algebraic realization  with dimension smaller than $N$.
The dimension of a minimal realization of $k$ is called \textbf{ McMillan degree} of $k$.
\end{definition}
Thus, non-minimality is a source of non-uniqueness of the state-space model. Minimality guarantees that we consider only components of the state vector which are relevant for the output process.  If we have a non-minimal system there might be non-stationary components having no effect on the output process. Therefore, this property implies a one-to-one correspondence of the non-stationarity of the state process and the output process.
In the following we want to characterize minimal state-space models, therefore we need some more definitions.

\begin{definition} $\mbox{}$
\begin{itemize}
\item[(a)] A  state-space model $(A,B,C)$ of dimension $N$  is \textbf{observable} if the observability matrix
\begin{eqnarray*}
\mathcal{O}_{CA}:=\begin{pmatrix} C^\mathsf{T}&(CA)^\mathsf{T}&\ldots&(CA^{N-1})^\mathsf{T}\end{pmatrix}^\mathsf{T}\in M_{dN,N}(\R)
\end{eqnarray*}
has full rank, i.e. if $\rank(\mathcal{O}_{CA})=N$.
\item[(b)]
A state-space model $(A,B,C)$ of dimension $N$  is \textbf{controllable} if the controllability matrix
\begin{eqnarray*}
\mathcal{C}_{AB}:=\begin{pmatrix} B& AB & \ldots &A^{N-1}B \end{pmatrix}\in M_{N,mN}(\R)
\end{eqnarray*}
has full rank, i.e. if $\rank(\mathcal{C}_{AB})=N$.
\end{itemize}
\end{definition}
There exists several equivalent characterizations of observability and controllability (see Bernstein~\cite[Theorem 12.3.18 and Theorem 12.6.18]{Bernstein2009})
we repeat only one of them which we use in this paper.

\begin{lemma}  \label{Lemma:control}
For $\lambda\in\C$ define the matrices
\beao
    \mathcal{O}_{CA}(\lambda)=\left(\begin{array}{c}\lambda I_N-A \\ C\end{array}\right) \quad \mbox{ and } \quad \mathcal{C}_{AB}(\lambda)=\left(\begin{array}{c}\lambda I_N-A \quad \quad B\end{array}\right).
\eeao
\begin{itemize}
    \item[(a)] The state space model $(A,B,C)$ is observable iff $\rank  (\mathcal{O}_{CA}(\lambda))=N$ for any $\lambda\in\C$.
    \item[(b)]  The state space model $(A,B,C)$ is controllable iff $\rank (\mathcal{C}_{AB}(\lambda))=N$ for any $\lambda\in\C$.
\end{itemize}
\end{lemma}
\begin{proof}
(a) follows from~\cite[Proposition 12.3.13 and Theorem 12.3.18 ]{Bernstein2009}. Similarly
(b) is a conclusion of~\cite[Proposition 12.6.13 and Theorem 12.6.18]{Bernstein2009}.
\end{proof}

\begin{lemma}
\label{LemmaMinimality}
 Let  $(A,B,C,L)$ be a $d$-dimensional  linear state-space model with block representation
  \begin{align*}
A:=\begin{pmatrix}0_{c\times c} & 0_{c\times (N-c)}\\ 0_{ (N-c)\times c} & A_2\end{pmatrix}\in M_{N,N}(\R),\,
B:=\begin{pmatrix}B_1\\  B_2\end{pmatrix}\in M_{N, m}(\R) \quad \text{and} \quad
C:=\begin{pmatrix}C_1, & C_2 \end{pmatrix}\in M_{d, N}(\R)
\end{align*}
where $0\leq c\leq d$, $A_2\in M_{N-c,N-c}(\R)$, $C_1\in M_{d,c}(\R)$, $C_2\in M_{d,N-c}(\R)$, $B_1\in M_{c,m}(\R)$ and $B_2\in M_{N-c,m}(\R)$.
Furthermore, assume that $A_2$ is non-singular.
 Then $(A,B,C)$ is minimal if and only if $\rank\, B_1=\rank\, C_1=c$ and $(A_2,B_2,C_2)$  is minimal.
 \end{lemma}
\begin{proof}
 From  Hannan and Deistler \cite[Theorem 2.3.3]{hannandeistler2012} we already know that  $(A,B,C)$ is minimal iff it is controllable and observable.
 Therefore we use the characterisation of observability and controllability given in \Cref{Lemma:control} and distinguish the two cases $\lambda=0$ and $\lambda\not=0$.\\
\textbf{Case 1:} $\lambda=0$. Then

\begin{align*}
\rank( \mathcal{O}_{CA}(0))&=\rank\begin{pmatrix}  0_c & 0_{c\times (N-c)} \\ 0_{(N-c)\times c}  &-A_2 \\ C_1 & C_2 \end{pmatrix}=\rank\begin{pmatrix}  -A_2 &0_{(N-c)\times c}  \\   C_2 &C_1\end{pmatrix}\\
&=(N-c)+\rank(C_1+C_2A_2^{-1}0_{(N-c)\times c})=(N-c)+\rank C_1.
\end{align*}
The third equality follows by Bernstein \cite[Proposition 2.8.3]{Bernstein2009} and $A_2$ non-singular. Hence, \linebreak  $\rank ( \mathcal{O}_{CA}(0))=N$ iff $\rank C_1=c$.
Similarly we receive $\rank  (\mathcal{C}_{AB}(0))=N$ iff $\rank B_1=c$.\\
\textbf{Case 2:} $\lambda\not=0$. Then
\begin{align*}
\rank ( \mathcal{O}_{CA}(\lambda))&=\rank\begin{pmatrix} \lambda I_c & 0_{c\times (N-c)} \\ 0_{(N-c)\times c}  &\lambda I_{N-c}-A_2 \\ C_1 & C_2 \end{pmatrix}\\
&=c+\rank\left(\begin{pmatrix}\lambda I_{N-c} -A_2 \\ C_2\end{pmatrix}-\begin{pmatrix} 0_{(N-c)\times c}\\C_1 \end{pmatrix} \cdot \lambda^{-1} I_c \cdot 0_{c\times (N-c)}\right)
\\&= c+\rank\begin{pmatrix}\lambda I_{N-c} -A_2 \\ C_2\end{pmatrix}=c+\rank  (\mathcal{O}_{C_2A_2}(\lambda)).
\end{align*}
Again, we used Bernstein \cite[Proposition 2.8.3]{Bernstein2009}. Thus,  $\rank (\mathcal{O}_{CA}(\lambda))=N$ iff $\rank ( \mathcal{O}_{C_2A_2}(\lambda))=N-c$ for $\lambda\not=0$.
Moreover, since $A_2$ is non-singular and has full rank, $\rank ( \mathcal{O}_{C_2A_2}(0))=N-c$ as well. Due to \Cref{Lemma:control}, we finally get that
$\rank(\mathcal{O}_{CA}(\lambda))=N$ iff $(A_2,B_2,C_2)$ is observable.
In combination with Case 1 and \Cref{Lemma:control} that means  that $(A,B,C)$ is observable iff $\rank C_1=c$ and $(A_2,B_2,C_2)$  observable.
Similar calculations yield $\rank ( \mathcal{C}_{AB}(\lambda))=c+\rank ( C_{A_2B_2}(\lambda))$ for $\lambda\not=0$ and hence,
$(A,B,C)$  is controllable iff $\rank B_1=c$ and $(A_2,B_2,C_2)$  controllable.
\end{proof}
Last but not least, we give the formal definition of observational equivalence of state-space models.

\begin{definition}
\label{defObsevEquiv}
 A minimal  state-space model $(A,B,C)$ is called \textbf{observationally equivalent} to the minimal model $(\widetilde{A},\widetilde{B},\widetilde{C})$ if they give rise to the same transfer function.
\end{definition}
A conclusion of Hannan and Deistler \cite[Theorem 2.3.4]{hannandeistler2012} is that
  $(A,B,C)$ and $(\widetilde{A},\widetilde{B},\widetilde{C})$ are observationally equivalent iff there exists a nonsingular transformation matrix $T\in GL_N(\R)$ such that $A=T\widetilde{A}T^{-1}$, $B=T\widetilde{B}$ and $C=\widetilde{C}T^{-1}$. Such a transformation leads to a corresponding basis change of the state vector to $\widetilde{X}(t)=TX(t)$. Hence,
  \begin{align*} \label{eq3}
    Y(t)&=C\exp(A(t-t_0))X(t_0) +C\int_{t_0}^t\exp(A(t-u))B\, \dif L(u) \nonumber\\
        &=\wt C\exp(\wt A(t-t_0))\wt X(t_0) +\wt C\int_{t_0}^t\exp(\wt A(t-u))\wt B\, \dif L(u)
\end{align*}
is a solution of  the state space model  $(A,B,C)$ as well as $(\wt A,\wt B,\wt C)$ .

\section{Cointegrated state-space models}\label{sec:CointMCARMA} \label{Section:2}

 \subsection{Characterization of cointegrated state space models}

% \begin{definition}
% Let $Y$ be a solution of a linear state space model $(A,B,C,L)$. If $Y$ is cointegrated with cointegration rank $r$
% then $Y$ is called \textbf{cointegrated state space process}.
% \end{definition}

The aim of this section is to characterize  cointegrated solutions of linear state space models. For this purpose we introduce a convenient canonical form for a state-space model. A canonical form is an unique representative of the class of observationally equivalent linear state-space models. There is no similar transformation matrix other than the identity matrix or permutation matrix which preserves this structure. An example for a canonical form for stationary solutions of continuous-time state space models is the echelon canonical form (see  Schlemm and Stelzer \cite[Section 4.1]{SchlemmStelzer2012}).
 Our canonical form for the cointegrated continuous-time linear state-space model has a similar structure as the canonical form for the discrete-time linear state-space models  presented in Bauer and Wagner \cite[Theorem 2]{BauerWagner2012}.
The advantage of this canonical form is that the non-stationary and the stationary part are decoupled and can be transformed separately. Moreover, this form enables us to use existing results for stationary output processes of state-space models and L\'evy processes in the following. Before we can state the result we repeat the definition of a positive lower triangular  matrix:  a matrix $M=[m_{ji}]_{1\leq i\leq d,1\leq j\leq c}\in M_{d, c}(\R)$ with $\rank M=c$ is positive lower triangular if
there exists $j_1,\ldots,j_c\in\{1,\ldots,d\}$ with $1\leq j_1<\ldots<j_c\leq d$ and $m_{j_ll}>0$ and $m_{jl}=0$ for $j<j_l$.

\begin{theorem}
\label{ThmCanonFormCoint}
Let $(A,B,C,L)$ be a  $d$-dimensional minimal state-space model  which satisfies $\sigma(A)\subset\{(-\infty,0)+i\R\} \cup \{0\}$, and the algebraic and the geometric multiplicity of the eigenvalue zero is equal to $c$, $0\leq c\leq d$.
Then there exists a unique observationally equivalent minimal state-space representation given by
\begin{equation}
\begin{split}
\begin{pmatrix}\dif X_1(t)\\\dif X_2(t)\end{pmatrix}&=\begin{pmatrix}0_{c\times c} & 0_{c\times (N-c)}\\ 0_{ (N-c)\times c} & A_2\end{pmatrix}\begin{pmatrix}X_1(t)\\X_2(t)\end{pmatrix}+\begin{pmatrix}B_1\dif L(t)\\  B_2\dif L(t)\end{pmatrix},
\\[2mm]
\label{eqCanonicalFormSSMcont}
Y(t)&=C_1 X_1(t)+ C_2 X_2(t), \quad t\geq t_0,
\end{split}
\end{equation}
where
\begin{enumerate}
\item \label{item1}
%the non-stationary part of the transition matrix $A$ is the zero matrix $0_{c\times c}$ and for the stationary part $A_2$ it holds that
$\sigma(A_2)\subset\{(-\infty,0)+i\R\}$,
\item \label{item2}
the matrix $C_1\in M_{d, c}(\R)$ satisfies $C_1^\mathsf{T}C_1=I_c$ and $C_1$ is a positive lower triangular matrix,
\item \label{item3}
the matrix $B_1\in M_{c, d}(\R)$ has rank $c$,
\item the stationary state-space model $(A_2,B_2,C_2)$ is  given in canonical form.
    \label{item4}
\end{enumerate}
Moreover, $Y$ has the representation
\begin{align*}
Y(t)=C_1X_1(t_0) +C_2\exp(A_2(t-t_0))X_2(t_0)
+C_1B_1[L(t)-L(t_0)] +\int_{t_0}^tC_2\exp(A_2(t-u))B_2\, \dif L(u).
\end{align*}
\end{theorem}
\begin{proof}
We define the following matrices \begin{align}
\label{eqABCprime}
A^*:=\begin{pmatrix}0_{c\times c} & 0_{c\times (N-c)}\\ 0_{ (N-c)\times c} & A_2\end{pmatrix},\quad
B^*:=\begin{pmatrix}B_1\\  B_2\end{pmatrix} \quad \text{ and } \quad
C^*:=\begin{pmatrix}C_1, & C_2 \end{pmatrix}.
\end{align}
For the existence of the representation \eqref{eqCanonicalFormSSMcont} we need to show that there exists a $T\in GL_N(\R)$ which transforms the state-space model $(A,B,C)$ to the desirable form $(A^*,B^*,C^*)$ satisfying all restrictions (i)-(iv). Afterwards, we have to show that this transformation matrix is unique which
results in the uniqueness of this representation.

\textbf{Existence:}
Due to the eigenvalue assumption on the matrix $A$ the upper part of $A^*$ is just the Jordan normal form corresponding to the eigenvalue zero.
 Thus there exists a transformation matrix $T^\prime\in GL_N(\R)$ such that
 $$T'A{T'}^{-1}=\begin{pmatrix}0_{c\times c} & 0_{c\times (N-c)}\\ 0_{ (N-c)\times c} & A_2'\end{pmatrix}=:A',$$
 where the eigenvalues of $A_2'$ coincide with the non-zero eigenvalues of $A$ which have by assumption strictly negative real parts.
 Otherwise $A_2'$
 is not specified yet. Further,
 \begin{align*}
 B':=T'B=:\begin{pmatrix} {B_1'}^\mathsf{T},  {B_2'}^\mathsf{T}\end{pmatrix}^\mathsf{T} \quad
 \mbox{and} \quad  C':=C{T'}^{-1}=:\begin{pmatrix}C_1', & C_2' \end{pmatrix}.
 \end{align*}
Since the block-diagonal structure of $A^\prime$ is preserved by block transformations, we consider
in the following only block-diagonal transformation matrices $T^{\prime\prime}=\diag(T_1'',T_2'')$, $T_1''\in GL_{c}(\R)$, $T_2''\in GL_{N-c}(\R)$ (see Gantmacher \cite{Gantmacher1986}, p.231)
resulting in
\begin{align*}
A''&:=T''A'{T''}^{-1}=\begin{pmatrix}0_{c\times c} & 0_{c\times (N-c)}\\ 0_{ (N-c)\times c} & T_2''A_2'{T_2''}^{-1}\end{pmatrix},\quad
B'':=T''B'=\begin{pmatrix}T_1''B_1'\\  T_2''B_2'\end{pmatrix} \quad \text{ and } \quad\\
C''&:=C'{T''}^{-1}=\begin{pmatrix}C_1'{T_1''}^{-1}, & C_2'{T_2''}^{-1} \end{pmatrix}.
\end{align*}
Due to  Hannan and Deistler \cite[Theorem 2.3.4]{hannandeistler2012}, the triplets $(A,B,C)$, $(A',B',C')$ and $(A'',B'',C'')$ are observationally equivalent and hence, minimal so that due to
\Cref{LemmaMinimality} both $(A_2',B_2',C_2')$ and $(A_2'',B_2'',C_2'')$ are still minimal with $\rank B_1'=\rank B_1''=\rank C_1'=\rank C_1''=c$.
A consequence of \Cref{Lemma:Appendix}(b) is that there exists a transformation matrix $T_1''$ such that $C_1:=C_1'{T_1''}^{-1}$ satisfies {(ii)}. Then $B_1:=T_1''B_1'$.
Since the eigenvalues of $A_2'$ have strictly negative real parts, $(A_2',B_2',C_2')$ forms a stationary
linear state-space model and further, is minimal. Hence, there exists a transformation matrix $T_2''$ such that
$A_2:=T_2''A_2'{T_2''}^{-1}$, $B_2:=T_2''B_2'$  and $C_2:=C_2'{T_2''}^{-1}$ satisfy (iv).
Moreover, the eigenvalues of $A_2'$ and hence, $A_2$ have strictly negative real parts so that (i) is satisfied as well. Finally, we set
$T=T''T'$ and $(A^*,B^*,C^*)=(A'',B'',C'')$.

\textbf{Uniqueness:} Assume that there exist matrices
\begin{align*}
\wt A:=\begin{pmatrix}0_{c\times c} & 0_{c\times (N-c)}\\ 0_{ (N-c)\times c} & \wt A_2\end{pmatrix},\quad
\wt B:=\begin{pmatrix}\wt B_1\\  \wt B_2\end{pmatrix} \quad \text{ and } \quad
\wt C:=\begin{pmatrix}\wt C_1, & \wt C_2 \end{pmatrix},
\end{align*}
so that the state-space model $(\wt A,\wt B,\wt C)$ satisfies the assumptions of this theorem as well. But then due to Hannan and Deistler~\cite[Theorem 2.3.4]{hannandeistler2012} there
 exists a transformation $T\in GL_N(\R)$ with $(\wt A,\wt B,\wt C)=(TA^*T^{-1},TB^*,C^*T^{-1})$. Due to the structure of $A^*$ and $\wt A$
 the equation $TA^*T^{-1}=\wt A$  implies $T=\diag(T_1,T_2)$ with $T_1\in GL_c(\R)$ and $T_2\in GL_{N-c}(\R)$.
To be more precise $\wt A_2=T_2A_2T_2^{-1}$, $\wt B_1=T_1B_1$, $\wt B_2=T_2B_2$, $\wt C_1=C_1T_1^{-1}$ and $\wt C_2=C_2T_2^{-1}$.
Since  $(A_2,B_2,C_2)$ and $(\wt A_2,\wt B_2,\wt C_2)$, respectively are given in canonical form (cf. restriction (iv)), $T_2$
is the identity matrix.
In order to prove uniqueness it remains to show that $T_1$ is the identity matrix as well.
If we now exploit the fact that $C_1$ and $\widetilde{C}_1$ are both positive lower triangular matrices, we further get that $T_1$ is a positive lower triangular matrix  itself.
Due to \Cref{Lemma:Appendix}(a) there exists a unique matrix $U\in M_{d,c}(\R)$ with $U^\mathsf{T}U=I_c$ and a lower positive triangular matrix $L\in GL_c(\R)$
with $C_1=UL$. But then $\wt C_1T_1=C_1=C_1I_c=UL$ with a unique $U$. Thus, $C_1=U=\wt C_1$ and finally, $T_1=I_c$.
\end{proof}

%In the following we use that a Levy process $(L(t))_{t\geq 0}$ can also be defined on
%the negative real-line as $L(t)=\wt L(-t-)$ for $t<0$ with an independent copy $(\wt L(t))_{t\geq 0}$ of $(L(t))_{t\geq 0}$.

\begin{theorem}
\label{defCointegratedMCARMA}
Let $(A,B,C,L)$ be a  $d$-dimensional minimal state-space model which satisfies $\sigma(A)\subset\{(-\infty,0)+i\R\} \cup \{0\}$, and the algebraic and the geometric multiplicity of the eigenvalue zero is equal to $c$ with $0<c< d$. With the notation of \Cref{ThmCanonFormCoint} define
\begin{align}
    \label{eqRepContCointSSMSepar}
        Y(t)=C_1X_1(t_0) +C_1B_1[L(t)-L(t_0)] +\int_{-\infty}^tC_2\exp(A_2(t-u))B_2\, \dif L(u), \quad t\geq t_0,
\end{align}
and
\begin{align}
\label{eqStatPartofCointSSM}
    Y_{st}(t)=\int_{-\infty}^tC_2\exp(A_2(t-u))B_2\, \dif L(u), \quad t\in\R.
\end{align}
Then $Y$ is a cointegrated solution of the state space model $(A,B,C,L)$ with
   cointegration space spanned by $C_{1}^\perp$ and cointegration rank $r=d-c=\rank C_{1}^\perp$. In particular, $C_{1}^{\perp\,\mathsf{T}} Y=C_{1}^{\perp\mathsf{T}} Y_{st}$ is stationary
   and every component of $C_1^\mathsf{T}Y$ is non-stationary.
Moreover, if $X_1(t_0)$ is independent of $\sigma(L(t)-L(t_0):t\geq t_0)$ then $Y$ is causal.
\end{theorem}
\begin{proof}
First of all, we take  $X_2(t_0)=\int_{-\infty}^{t_0}\exp(A_2(t_0-u))B_2\, \dif L(u)$  in \Cref{ThmCanonFormCoint} which results in the representation of $Y$
in \Cref{ThmCanonFormCoint}. Hence, $Y$ is a solution of the state space model $(A,B,C,L)$.  The process $Y_{st}$ is well-defined and stationary since
$\sigma(A_2)\subseteq \{(-\infty,0)+i\R\}$ and $\E\|L(1)\|^2<\infty$.
Due to $\rank C_1=\rank B_1=c$ (see \Cref{LemmaMinimality}) and $\Sigma_L$  non-singular, we have  $\rank(C_1B_1\Sigma_LB_1^{\mathsf{T}}C_1^{\mathsf{T}})=c$ and finally,
$C_1B_1\Sigma_LB_1^{\mathsf{T}}C_1^{\mathsf{T}}\not=0_{d\times d}$.
Hence, $Y$ as given in \eqref{eqRepContCointSSMSepar}
is indeed an integrated process because the  L\'evy process $(C_1B_1L(t))_{t\geq 0}$ is a non-stationary process with strictly stationary increments.
Moreover, $C_{1}^{\perp\mathsf{T}} Y=C_{1}^{\perp\mathsf{T}} Y_{st}$ is a stationary process.
On the other hand, due to  $C_1^\mathsf{T}C_1=I_c$ by \Cref{ThmCanonFormCoint} (ii) and $\rank(B_1\Sigma_L B_1^{\mathsf{T}})=c$,  every component of $C_1^\mathsf{T}Y=X_1(t_0)+B_1L(t)+C_1^{\mathsf{T}}Y_{st}$ is non-stationary so that  $C_{1}^\perp$ spans the cointegration space with
$\rank C_{1}^\perp=d-c$.
\end{proof}

%As we already mentioned, we can alternatively define the cointegrated linear state-space model as a sum of a L\'evy process and a stationary linear state-space process which is verified in the next corollary.

In a way we have as well the opposite direction of this theorem.

\begin{theorem} \label{Theorem 3.3}
Let $0<c<d$, $C_1\in M_{d,c}(\R), B_1\in M_{c,m}(\R)$ with $\rank C_1=\rank B_1=c$,  $A_2\in M_{N-c,N-c}(\R)$,  $C_2\in M_{d,N-c}(\R)$,  $B_2\in M_{N-c,m}(\R)$, $\sigma(A_2)\subset \{(-\infty,0)+i\R\}$ and $(A_2,B_2,C_2)$ minimal. Furthermore,
\beao
    Y(t)=C_1X_1(t_0) +C_1B_1[L(t)-L(t_0)] +\int_{-\infty}^tC_2\exp(A_2(t-u))B_2\, \dif L(u)), \quad t\geq t_0,
\eeao
 Then $Y$ is a   cointegrated solution  of a minimal linear state-space model $(A,B,C,L)$ with cointegration rank $r=d-c$ satisfying $\sigma(A)\subset\{(-\infty,0)+i\R\} \cup \{0\}$, and the algebraic and the geometric multiplicity of the eigenvalue zero is equal to $c$.
\end{theorem}
\begin{proof} $\mbox{}$\\
Let $(A,B,C)$  be defined as in \Cref{LemmaMinimality}. From \Cref{LemmaMinimality} we also know that $(A,B,C)$ is minimal.
The rest is a conclusion of \Cref{defCointegratedMCARMA}.
\end{proof}

In summary, the following definition is well-defined.

\begin{definition} \label{definition ssp}
Let $0<c<d$, $C_1\in M_{d,c}(\R), B_1\in M_{c,m}(\R)$ ,  $A_2\in M_{N-c,N-c}(\R)$,  $C_2\in M_{d,N-c}(\R)$,  $B_2\in M_{N-c,m}(\R)$, $\sigma(A_2)\subset (-\infty,0)+i\R$ and $0< \rank(C_1B_1)<d$. Then
\beao
    Y(t)=C_1X_1(t_0) +C_1B_1[L(t)-L(t_0)] +\int_{-\infty}^tC_2\exp(A_2(t-u))B_2\, \dif L(u), \quad t\geq t_0,
\eeao
 is called \textbf{cointegrated linear state space process} with parameters $(A_2,B_1,B_2,C_1,C_2,L)$ and cointegration rank $r=\rank(C_1B_1)$.
\end{definition}

Until now, we investigated only solutions of minimal state space models. Finally, we would like to extend it to general state space models.

\begin{lemma} \label{Lemma3.5}
Let $(A,B,C,L)$ be a  $d$-dimensional minimal state-space model with McMillian degree $N$ which satisfies $\sigma(A)\subset\{(-\infty,0)+i\R\} \cup \{0\}$, and the algebraic and the geometric multiplicity of the eigenvalue zero is equal to $c$ with $0<c< d$.
Furthermore, let  $(\mathcal{A},\mathcal{B},\mathcal{C},L)$ be a state-space model of dimension $\mathcal{N}$. Assume
\beao
    C(zI_N-A)^{-1}B=\mathcal{C}(zI_{\mathcal{N}}-\mathcal{A})^{-1}\mathcal{B}.
\eeao
Then there exists a cointegrated state space process $Y$ which is a solution of both $(A,B.C)$ as well as $(\mathcal{A},\mathcal{B},\mathcal{C})$.
\end{lemma}

 From this we see that the transfer function uniquely defines cointegrated state space processes. Moreover, it means that if we have a state model $(A,B,C)$
 and if we know a minimal form we can calculate  a cointegrated solution.

\begin{proof}
 With the notation of \Cref{ThmCanonFormCoint} define
  \begin{align*}
A^*:=\begin{pmatrix}0_{c\times c} & 0_{c\times (N-c)}\\ 0_{ (N-c)\times c} & A_2\end{pmatrix}\in M_{N,N}(\R),\,
B^*:=\begin{pmatrix}B_1\\  B_2\end{pmatrix}\in M_{N, m}(\R) \text{ and }
C^*:=\begin{pmatrix}C_1, & C_2 \end{pmatrix}\in M_{d, N}(\R)
\end{align*}
and let $T\in GL_N(\R)$ be the transformation matrix satisfying $(A,B,C)=(TA^*T^{-1},TB^*,C^*T^{-1})$.
Furthermore, let $\mathcal{T}\in GL_{\mathcal{N}}(\R)$ be the transformation bringing $\mathcal{A}$ in Jordan normal form.
Define $\mathcal{A}^*=\mathcal{T}\mathcal{A}\mathcal{T}^{-1}=\diag(\mathcal{A}^*_1,\mathcal{A}_2^*,\mathcal{A}_3^*)$ where $\mathcal{A}^*_1\in M_{k_1,k_1}$
is the Jordan block
of the eigenvalue $0$, $\mathcal{A}^*_2\in M_{k_2,k_2}$ is the Jordan block
of eigenvalues with strictly negative real parts and $\mathcal{A}^*_3\in M_{k_3,k_3}$ is the Jordan block
of eigenvalues with positive real parts with $k_1,k_2,k_3\in\N_0$ where $k_1+k_2+k_3=\mathcal{N}$. Further define $(\mathcal{A}^*,\mathcal{B}^*,\mathcal{C}^*)=(\mathcal{T}\mathcal{A}\mathcal{T}^{-1},\mathcal{T}\mathcal{B},\mathcal{C}\mathcal{T}^{-1})$
and write accordingly
\beao
\mathcal{B}^*=\begin{pmatrix}\mathcal{B}_1\\  \mathcal{B}_2 \\ \mathcal{B}_3\end{pmatrix}\in M_{\mathcal{N}, m}(\R) \quad \text{and} \quad
\mathcal{C}^*=\begin{pmatrix}\mathcal{C}_1, & \mathcal{C}_2, &\mathcal{C}_3 \end{pmatrix}\in M_{d, \mathcal{N}}(\R).
\eeao
Hence, we receive
\beao
    \frac{1}{z}C_1B_1+C_2(zI_{N-c}-A_2)^{-1}B_2&=&C(zI_N-A)^{-1}B= \mathcal{C}(zI_{\mathcal{N}}-\mathcal{A})^{-1}\mathcal{B}\\
    &=&\mathcal{C}_1(zI_{k_1}-\mathcal{A}_1)^{-1}\mathcal{B}_1+\mathcal{C}_2(zI_{k_2}-\mathcal{A}_2)^{-1}\mathcal{B}_2+\mathcal{C}_3(zI_{k_3}-\mathcal{A}_3)^{-1}\mathcal{B}_3.
\eeao
However, this is only possible if %the algebraic and geometric multiplicity of the eigenvalue $0$ of $\mathcal{A}_1$ and hence, $\mathcal{A}$ is the same,
%$\sigma(A_2)\subseteq \sigma(\mathcal{A}_2)$ and further,
\beam \label{eq1}
    C_1B_1= \mathcal{C}_1\mathcal{B}_1, \quad C_2(zI_{N-c}-A_2)^{-1}B_2
        =\mathcal{C}_2(zI_{k_2}-\mathcal{A}_2)^{-1}\mathcal{B}_2 \mbox{ and }  0_{d\times m}=\mathcal{C}_3(zI_{k_3}-\mathcal{A}_3)^{-1}\mathcal{B}_3
\eeam
(see also Bernstein~\cite[Theorem 12.9.16]{Bernstein2009}).
Let $\Gamma$ be a closed contour in $\C$ winding around each eigenvalue of $\mathcal{A}_2$ exactly once, then by the
spectral representation theorem \cite[Theorem 17.5]{lax2002}
\beao
    \e^{\mathcal{A}_2t}=\frac{1}{2\pi i}\int_\Gamma\e^{zt}(zI_{k_2}-\mathcal{A}_2)^{-1}\,dz
\eeao
and hence,
\beam \label{eq2}
    \mathcal{C}_2\e^{\mathcal{A}_2t}\mathcal{B}_2&=&\frac{1}{2\pi i}\int_\Gamma\e^{zt} \mathcal{C}_2(zI_{k_2}-\mathcal{A}_2)^{-1}\mathcal{B}_2\,dz\nonumber\\
        &=&\frac{1}{2\pi i}\int_\Gamma\e^{zt} {C}_2(zI_{{N-c}}-{A_2})^{-1}{B}_2\,dz= C_2\e^{A_2t}B_2.
\eeam
Then using \eqref{eq1} and \eqref{eq2} gives
\begin{align}
    Y(t)&:=C_1X_1(t_0) +C_1B_1[L(t)-L(t_0)] +\int_{-\infty}^tC_2\exp(A_2(t-u))B_2\, \dif L(u) \nonumber\\
        &=C_1X_1(t_0) +\mathcal{C}_1\mathcal{B}_1[L(t)-L(t_0)] +\int_{-\infty}^t\mathcal{C}_2\exp(\mathcal{A}_2(t-u))\mathcal{B}_2\, \dif L(u)\nonumber\\
        &=\mathcal{C}^*\exp(\mathcal{A}^*(t-t_0))\mathcal{X}^*(t_0) +\int_{t_0}^t\mathcal{C}^*\exp(\mathcal{A}^*(t-u))\mathcal{B}^*\, \dif L(u) \label{eq4}
\intertext{with $\mathcal{X}^*(t_0)=(\mathcal{X}_1^*(t_0)^{\mathsf{T}}\,\,\mathcal{X}_2^*(t_0)^{\mathsf{T}}\,\,\,\mathcal{X}_3^*(t_0)^{\mathsf{T}})^{\mathsf{T}}$ and $$C_1X_1(t_0)=\mathcal{C}_1\mathcal{X}_1^*(t_0), \quad \mathcal{X}_2^*(t_0)=\int_{-\infty}^{t_0}\exp(\mathcal{A}_2(t_0-u))\mathcal{B}_2\, \dif L(u), \quad \mathcal{X}_3^*(t_0)=0_{k_3}. \quad$$  Remark that $C_1X_1(t_0)=\mathcal{C}_1\mathcal{X}_1^*(t_0)$ is possible since due to $\rank C_1=c=\rank(C_1B_1)$ and \eqref{eq1}
we have
\beao
    \text{im}(C_1)=\text{im}(C_1B_1)=\text{im}(\mathcal{C}_1\mathcal{B}_1)\subseteq \text{im}(\mathcal{C}_1)
\eeao
where $\text{im}(C_1)$ denotes the image of $C_1$. Finally, plugging in  $\mathcal{X}(t_0)=\mathcal{T}^{-1}\mathcal{X}^*(t_0)$ and $(\mathcal{A}^*,\mathcal{B}^*,\mathcal{C}^*)=(\mathcal{T}\mathcal{A}\mathcal{T}^{-1},\mathcal{T}\mathcal{B},\mathcal{C}\mathcal{T}^{-1})$ in \eqref{eq4} result in}
        Y(t)&=\mathcal{C}\exp(\mathcal{A}(t-t_0))\mathcal{X}(t_0) +\int_{t_0}^t\mathcal{C}\exp(\mathcal{A}(t-u))\mathcal{B}\, \dif L(u) \nonumber
\end{align}
which results in the statement.
\end{proof}

\subsection{Probabilistic properties of cointegrated state space processes}

 We assume for reasons of simplicity that $t_0=0$ for the rest of the paper.
Due to the decoupled canonical form  of the cointegrated state-space process the covariance matrix can also be decomposed.

\begin{lemma}
\label{propCovCointSSM}
Let $Y$ be a cointegrated linear state-space process with parameters $(A_2,B_1,B_2,C_1,C_2,L)$ and $X_1(0)$ be independent of $\sigma(L(t):t\geq 0)$ with $\E\|X_1(0)\|^2<\infty$.
Then
$\E[Y(t)]=C_1\E[X_1(0)]$ for $t\geq 0$ and
 for $t,s\geq 0$ we have
\begin{eqnarray*}
\label{eqCovCointSSM}
\Cov(Y(t),Y(t+s))&=&C_1\Cov(X_1(0))C_1^{\mathsf{T}}+C_2\exp(A_2\, s)\Gamma_0C_2^\mathsf{T}+\int_0^tC_2\exp(A_2u)B_2\Sigma_L(C_1B_1)^\mathsf{T}\,\dif u\\
&& +\int_s^{t+s}C_1B_1\Sigma_LB_2^\mathsf{T}\exp(A_2^\mathsf{T}u)C_2^\mathsf{T}\,\dif u+t\cdot C_1B_1\Sigma_L(C_1B_1)^\mathsf{T},
\end{eqnarray*}
where
\begin{eqnarray*}
\label{eqCovStatPartCointSSM}
\Gamma_0=\int_0^\infty\exp(A_2u)B_2\Sigma_L B_2^\mathsf{T}\exp(A_2^\mathsf{T}u)\,\dif u.
\end{eqnarray*}
\end{lemma}
\begin{proof}
We obtain for the expectation evidently
\begin{align*}
\E[Y(t)]&=\E\left[C_1X_1(0) +C_1B_1L(t) +C_2\int_{-\infty}^t\exp(A_2(t-u))B_2\, \dif L(u)\right]
=C_1\E[X_1(0)].
\end{align*}
W.l.o.g. we assume $X_1(0)=0_c$. Then
\begin{align*}
&\Cov(Y(t),Y(t+s))=\E\left[Y(t)Y(t+s)^\mathsf{T}\right]
\\
%&=\E\left[C_1B_1L(t)\left[C_1B_1L(t+s)\right]^\mathsf{T}
%+C_1B_1L(t)Y_{st}(t+s)^\mathsf{T}\right.
%+Y_{st}(t)\left(C_1B_1L(t+s)\right)^\mathsf{T}\left.+Y_{st}(t)Y_{st}(t+s)^\mathsf{T}\right]
%\\
&=C_1B_1\E\left[L(t)L(t+s)^\mathsf{T}\right]\left(C_1B_1\right)^\mathsf{T}+C_1B_1\E\left[\int_0^t 1\, \dif L(u)\left(\int_{-\infty}^{t+s}\exp(A_2(t+s-u))B_2\, \dif L(u)\right)^\mathsf{T}\right]C_2^\mathsf{T}
\\
&\quad +C_2\E\left[\int_{-\infty}^{t}\exp(A_2(t-u))B_2\, \dif L(u)\left(\int_0^{t+s} 1\, \dif L(u)\right)^\mathsf{T}\right]\left[C_1B_1\right]^\mathsf{T}+\E[Y_{st}(t)Y_{st}(t+s)^\mathsf{T}]
\\
&=t\cdot C_1B_1\Sigma_L\left(C_1B_1\right)^\mathsf{T}+C_1B_1\E\left[\int_0^t 1\, \dif L(v)\left(\int_{0}^{t}\exp(A_2(t+s-u))B_2\, \dif L(u)\right)^\mathsf{T}\right]C_2^\mathsf{T}
\\
&\quad+C_2\E\left[\int_{0}^{t}\exp(A_2(t-u))B_2\, \dif L(u)\left(\int_0^{t} 1\, \dif L(v)\right)^\mathsf{T}\right]\left(C_1B_1\right)^\mathsf{T}+\E[Y_{st}(t)Y_{st}(t+s)^\mathsf{T}],
\end{align*}
and finally, the result follows by calculating all the remaining expectations and Marquardt and Stelzer \cite[Proposition 3.13]{MarquardtStelzer2007}.
\end{proof}
The time dependence of the covariance function is clearly visible in this representation and hence, this process is indeed non-stationary.

\begin{lemma}
Let $Y$ be the cointegrated linear state-space process and $X_1(0)$ be independent of $\sigma(L(t):t\geq 0)$ with $\E\|X_1(0)\|^r<\infty$.
If $\E\|L(1)\|^{r}<\infty$ for some $r\in\N$  then $\E\|Y(t)\|^r<\infty$ for any $t\geq 0$.
\end{lemma}
\begin{proof}
The existence of the $r^{th}$-moment follows immediately by Marquardt and Stelzer \cite[Proposition 3.30]{MarquardtStelzer2007} giving $\E\|Y_{st}(t)\|^{r}<\infty$
for all $t\geq 0$.
\end{proof}

\section{Cointegrated MCARMA processes} \label{Section:3}
\subsection{MCARMA processes and state space models}

As already mentioned in the introduction, the motivation for the definition of an MCARMA process
is beeing the solution of the $d$-dimensional stochastic differential equation
\begin{subequations} \label{MCARMA}
\begin{eqnarray} \label{eq1.1}
     {P}(D)Y(t)={Q}(D)D L(t) \quad \mbox{ for } t\geq t_0,
\end{eqnarray}
where $D$ is the differential operator with respect to $t$,
\begin{align}
\label{PolP}
    {P}(z)&:= I_{d\times d}z^p+P_1z^{p-1}+\ldots+P_{p-1}z+P_p
\intertext{is the autoregressive polynomial with $P_1,\ldots, P_p\in M_{d,d}(\R)$ and}
\label{PolQ}
    {Q}(z)&:=Q_0z^q+ Q_1z^{q-1}+\ldots+ Q_{q-1}z+Q_q
\end{align}
\end{subequations}
is the moving average polynomial with $Q_0,\ldots,Q_q\in M_{d,m}(\R)$. Since a Lévy process is not differentiable
we present the formal definition of an MCARMA process now.

\begin{definition} \label{definition CARMA}
Let $P(z)$ and $Q(z)$ be defined as in \eqref{MCARMA} with $p>q$. Moreover,
\beam \label{Adefinition CARMA}
    A_M:=\left(\begin{array}{ccccc}
        0_{d\times d} & I_{d} & 0_{d\times d} & \cdots & 0_{d\times d}\\
        0_{d\times d} & 0_{d\times d} & I_{d} & \ddots & \vdots \\
        \vdots & & \ddots & \ddots & 0_{d\times d}\\
        0_{d\times d} & \cdots & \cdots & 0_{d\times d} & I_{d}\\
        -P_p & -P_{p-1} & \cdots & \cdots & -P_1
    \end{array}\right) \in \R^{pd\times pd},
\eeam
$C_M:=(I_{d},0_{d\times d},\ldots,0_{d\times d}) \in \R^{d\times pd}$ and $B_M:=(\beta_1^\mathsf{T} \cdots \beta_p^\mathsf{T})^\mathsf{T}\in \R^{pd\times m}$ with
\beao
   \beta_1:=\ldots:=\beta_{p-q-1}:=0_{d\times m}\quad
   \mbox{ and } \quad \beta_{p-j}:=-\sum_{i=1}^{p-j-1}P_i \beta_{p-j-i}+Q_{q-j},  \quad j=0,\ldots,q.
\eeao
Then the $\R^{d}$-valued \textbf{ MCARMA$(p,q)$ process} $Y=(Y(t))_{t\geq 0}$ is  defined by the state-space
equation
\begin{subequations} \label{CARMA:observation}
\beam
    Y(t)=C_M X(t)
\eeam
where
$X=(X(t))_{t\geq 0}$ is the solution to the $(pd)$-dimensional
stochastic differential equation
\begin{equation} \label{cstateeq}
\dd X(t)=A_M X(t)\,\dd t+B_M \,\dd L(t).
\end{equation}
\end{subequations}
\end{definition}

\begin{definition}
Let the cointegrated state space process $Y$ be the solution of the state space model $(A_M,B_M,C_M)$ as given in \eqref{Adefinition CARMA}.
Then $Y$ is called \textbf{cointegrated MCARMA process}.
\end{definition}

\begin{remark} $\mbox{}$
\begin{itemize}
    \item[(i)] In particular, MCARMA$(1,0)$ processes and the process $X$ in \eqref{cstateeq} are multivariate Ornstein-Uhlenbeck processes.
    \item[(ii)] % It is obvious that an MCARMA process is a special state-space model and by \Cref{defCointegratedMCARMA}
   % that a cointegrated MCARMA process is a cointegrated state-space model.
    From Marquardt and Stelzer \cite[Lemma 3.8]{MarquardtStelzer2007}
    we already know that $0\in\sigma(A_M)$ iff $0\in\sigma(P_p)$. Thus, the matrix $P_p$ plays a crucial role in the characterization
    of cointegration of MCARMA processes.
    \item[(iii)] In the case of stationary MCARMA processes Marquardt and Stelzer \cite{MarquardtStelzer2007}
    give details on the well-definedness of this model and that the MCARMA process can be interpreted as the
    solution of \eqref{MCARMA}. This holds as well for cointegrated MCARMA processes and can be seen by rewriting
        \eqref{CARMA:observation}  line by line.
    %\item[(iv)] By it is possible to see that an MCARMA process is the
    %    solution to \eqref{eq1.1}.
\end{itemize}
\end{remark}
Schlemm and Stelzer \cite[Corollary 3.4]{SchlemmStelzer2012} prove that the class of stationary MCARMA processes
    and the class of stationary state-space processes are equivalent. An analog result holds for the cointegrated case.

\begin{proposition}
        A $d$-dimensional process $Y$ is a cointegrated state space process  iff $Y$ is  a cointegrated MCARMA process.
\end{proposition}
\begin{proof}
  By definition, every cointegrated MCARMA process is a cointegrated state-space process.
 Conversely, let $(A,B,C)$ be a minimal state space model with McMillian degree $N$
   and the cointegrated state space process $Y$  be a solution of $(A,B,C)$. Then due to  Caines~\cite[Theorem 8 in Appendix 2]{Caines}
    (cf. Bernstein~\cite[Proposition 4.7.16]{Bernstein2009}) the matrix function $C(zI_N-A)B$ has a left coprime
   polynomial fraction description. Hence, there exists polynomials $P(z),Q(z)$ as  in \eqref{MCARMA} of some order $p$, respectively $p-1$ with
    $
        C(zI_N-A)^{-1}B=P(z)^{-1}Q(z).
    $
    Define $(A_M,B_M,C_M)$  as in \eqref{Adefinition CARMA} using this polynomials $P(z)$ and $Q(z)$. Then $$C(zI_N-A)^{-1}B=P(z)^{-1}Q(z)=C_M(zI_N-A_M)^{-1}B_M$$
    (cf. proof of \cite[Theorem 3.3]{SchlemmStelzer2012}) and hence, due to
\Cref{Lemma3.5}, the process $Y$ is as well a solution of $(A_M,B_M,C_M)$. This means $Y$ is a cointegrated MCARMA process.
\end{proof}

% Bernstein, Proposition 4.7.16 und 12.9.4

%%%%%%%%%%%%%
%

%%%%%%%%%%%%%
%
%%%%%%%%%%%%%
%
\subsection{Characterization of cointegrated MCARMA processes}
%
%%%%%%%%%%%%%

In this section, we characterize cointegration with respect to the matrices $P_p$ and $P_{p-1}$ in the autoregressive polynomial. The next
proposition is an extension of Comte \cite[Proposition 7]{Comte1999} for MCAR processes to MCARMA processes.

\begin{proposition}
\label{thmCoIntMARep}
Let $Y$ be a  MCARMA$(p,q)$ process as a solution of \eqref{MCARMA} with $p>q+1$.
Furthermore, we assume:
\begin{itemize}
\item[(M1)]
If $\det (P(z))=0$  then either $\Re(z)<0$  or  $z=0$.
\item[(M2)]
$rank(P_p)=rank(P(0))=d-c$, $0<c<d$, and $P_p=\alpha\beta^\mathsf{T}$, where the adjustment matrix $\alpha\in M_{d,d-c}(\R)$ and the cointegration matrix $\beta\in M_{d,d-c}(\R)$
   have full rank $r=d-c$.
\item[(M3)]
$\alpha^{\perp\mathsf{T}}
    P_{p-1}\beta^\perp\in M_{c,c}(\R)$  with full rank $c$.
%\item[(M4)] $\rank Q_q=d$.
\end{itemize}
Then  $DY$ and $\beta^\mathsf{T}Y$ are stationary processes where $DY$ is  the differential of $Y$ in the $L^2$ sense.
\end{proposition}

An important representation of cointegrated processes in discrete time is the so-called error correction form, which was first introduced by Sargan \cite{Sargan1964wages}.
For an MCARMA process an analog version of the error correction form is
% The following result is an extension of a result by Comte \cite{Comte1999} for CAR($p$) processes.
%\begin{definition}
%\label{defErrorCorrectionForm}
%Let $Y$ be a cointegrated MCARMA process as solution of \eqref{eq1.1}. The \textbf{error correction form} in continuous time is given by
\begin{align}
\label{eqErrorCorrectionForm1}
P^\ast(D)DY(t)=&-P_pY(t)+Q(D)DL(t),
\end{align}
where the polynomial $P^\ast$ has the representation
$P^\ast(z):=\frac{P(z)-P_p}{z}$.
%\end{definition}
If the first-order mean square derivative $DY$ of $Y$ exists then $DY$ is stationary
iff  $Y$ has stationary increments (see the proof of Comte~\cite[Proposition 1]{Comte1999}).

\begin{proof}[Proof of \Cref{thmCoIntMARep}]
By multiplying \eqref{eqErrorCorrectionForm1} by $\alpha$ and $\alpha^{\perp\mathsf{T}}$ we obtain
with $P_p=\alpha\beta^\mathsf{T}$ and $\alpha^{\perp\mathsf{T}}\alpha=0_{c\times (d-c)}$ the following
equations
\begin{equation} \label{eqProofCoIntMARep1}
\begin{split}
\alpha^\mathsf{T}Q(D)DL(t)&=\alpha^\mathsf{T}\alpha\beta^\mathsf{T} Y(t)+\alpha^\mathsf{T}P^\ast(D)DY(t),\\
\alpha^{\perp\mathsf{T}} Q(D)DL(t)&=\alpha^{\perp\mathsf{T}} P^\ast(D) DY(t).
\end{split}
\end{equation}
Since the system \eqref{eqProofCoIntMARep1} is not invertible in $Y$ and $DY$ we define new processes
\begin{align*}
Z(t):=(\beta^\mathsf{T}\beta)^{-1}\beta^\mathsf{T} Y(t)\quad\text{and}\quad
V(t):=(\beta^{\perp\mathsf{T}}\beta^\perp)^{-1}\beta^{\perp\mathsf{T}} DY(t) \quad \text{for } t\geq 0,
\end{align*}
 and obtain thereby invertibility.
The matrix $R:=(\beta,\beta^\perp)\in M_{d, d}(\R)$ of rank $d$ satisfies
\begin{align}
\label{eqProjMatrR}
R(R^\mathsf{T}
R)^{-1}R^\mathsf{T}=\beta(\beta^{\mathsf{T}}
\beta)^{-1}\beta^\mathsf{T}+\beta^\perp(\beta^{\perp\mathsf{T}}
\beta^\perp)^{-1}\beta^{\perp\mathsf{T}}=I_{d}
\end{align}
since it is the sum of the projection matrices on the range and the projection matrix on the null space of $\beta$.
 Moreover, for
$\bar{\beta}:=\beta(\beta^\mathsf{T}\beta)^{-1}\in M_{d,d-c}(\R)$ and
$\bar{\beta}^\perp:=\beta^\perp(\beta^{\perp\mathsf{T}}\beta^\perp)^{-1}\in M_{d, c}(\R)$ we have due to \eqref{eqProjMatrR} that
$\beta\bar{\beta}^\mathsf{T}+\beta^\perp\bar{\beta}^{\perp\mathsf{T}}=I_d$ holds. Furthermore, we have
\begin{align*}
DY(t)=(\beta\bar{\beta}^\mathsf{T}+\beta^\perp\bar{\beta}^{\perp\mathsf{T}})DY(t)=\beta
DZ(t)+\beta^\perp V(t).
\end{align*}
Rewriting system \eqref{eqProofCoIntMARep1} with the newly defined variables yields
\begin{align*}
\alpha^\mathsf{T}Q(D)DL(t)&=\alpha^\mathsf{T}\alpha(\beta^\mathsf{T}\beta) Z(t)+\alpha^\mathsf{T}P^\ast(D)\beta
DZ(t)+\alpha^\mathsf{T}P^\ast(D)\beta^\perp V(t),
\\
 \alpha^{\perp\mathsf{T}} Q(D)DL(t)&=\alpha^{\perp\mathsf{T}} P^\ast(D) \beta
 DZ(t)+\alpha^{\perp\mathsf{T}}P^\ast(D)\beta^\perp V(t).
\end{align*}
Rearranging the last expressions leads to
\begin{eqnarray}
\label{eqStationarySystem}
\widetilde{P}(D)(Z(t)^\mathsf{T},V(t)^\mathsf{T})^\mathsf{T}=(\alpha,\alpha^\perp)^\mathsf{T}Q(D)DL(t),
\end{eqnarray}
where the matrix polynomial $\widetilde{P}(z)$ is given by
\begin{align}
\label{eqwidetildP}
\widetilde{P}(z):=\begin{pmatrix}
\alpha^\mathsf{T}\alpha(\beta^\mathsf{T}\beta)+\alpha^\mathsf{T}P^\ast(z)\beta z & \alpha^\mathsf{T}P^\ast(z)\beta^\perp
\\
\alpha^{\perp\mathsf{T}} P^\ast(z) \beta z & \alpha^{\perp\mathsf{T}}P^\ast(z)\beta^\perp
\end{pmatrix}.
\end{align}
By assumption (M2) and (M3) we have
\begin{align*}
&\det(\widetilde{P}(0))=\det\begin{pmatrix}
\alpha^\mathsf{T}\alpha(\beta^\mathsf{T}\beta)& \alpha^\mathsf{T}P^\ast(0)\beta^\perp
\\
0_{c\times d-c} & \alpha^{\perp\mathsf{T}}P^\ast(0)\beta^\perp
\end{pmatrix}
=\det(\alpha^\mathsf{T}\alpha)\det(\beta^\mathsf{T}\beta)\det(\alpha^{\perp\mathsf{T}}P^\ast(0)\beta^\perp)\not=0,
\end{align*}
where all matrices in the last line have full rank and consequently, a non-zero determinant.
Then we can see due to \eqref{eqErrorCorrectionForm1} and \eqref{eqwidetildP} that $\widetilde{P}(z)=(\alpha,\alpha^\perp)^\mathsf{T} P(z)
(\beta,\beta^\perp/z)$ for $z\not=0$ and thus,
\begin{align} \label{P tilde}
\det\big(\widetilde{P}(z)\big)=\frac{1}{z^{d-r}}\det(\alpha,\alpha^\perp)^\mathsf{T}\det
\left(P(z)\right)\det(\beta,\beta^\perp)\not=0.
\end{align}
Finally,  $\widetilde{P}(z)$ has the same roots as $P(z)$, except the null ones and the non-zero roots are
assumed to have negative real parts due to (M1). Thus,
 \eqref{eqStationarySystem} has  a stationary solution $(Z,V)$ due to Marquard and Stelzer~\cite{MarquardtStelzer2007}. Then the  processes $DY(t)=\beta
DZ(t)+\beta^\perp V(t)$ and  $\beta^\mathsf{T}Y(t)=(\beta^\mathsf{T}\beta)Z(t)$
are stationary as well as linear combinations of stationary processes.
\end{proof}
\begin{theorem} \label{Theorem 4.6}
Let $Y$ be a  MCARMA$(p,q)$ process satisfying the assumptions of \Cref{thmCoIntMARep}. Assume additionally
\begin{itemize}
    \item[(M4)] $P(z)$ and $Q(z)$ are left coprime.
\end{itemize}
Then $Y$ is cointegrated.
\end{theorem}
\begin{proof}
From the proof of Schlemm and Stelzer~\cite[Theorem 3.3]{SchlemmStelzer2012} we already know that
\beao
    G(z):=P^{-1}(z)Q(z)=C_M(I_Nz-A_M)^{-1}B_M
\eeao
with $(A_M,B_M,C_M)$ as in \eqref{Adefinition CARMA}, and due to (M2)  that $\det(P(0))=0$. Then a conclusion of (M4) and Bernstein~\cite[Proposition 4.7.16]{Bernstein2009} is that
$0$ is a pole of $G$. Let $(A,B,C)$ be a minimal representation of $G$. Then as well $0\in\sigma(A)$
by Bernstein~\cite[Proposition 12.9.16]{Bernstein2009}. It remains to show that the algebraic and the geometric multiplicity of the
eigenvalue $0$ is the same.
 On the one hand, the representation of  $A_M$ in \eqref{Adefinition CARMA} and the proof of \Cref{thmCoIntMARep} yield
\beao
    \det(I_Nz-A_M)=\det(P(z))=Cz^{c}\det(\widetilde P(z))
\eeao
where $C\not=0$ is a constant and $\det(\widetilde P(0))\not=0$. On the other hand, $$N-\rank A_M=d-\rank P_p=d-(d-c)=c$$ due to (M2).
Hence, the algebraic and the geometric multiplicity
of the eigenvalue $0$ of $A_M$ is $c$. Since $(A,B,C)$ is minimal  the algebraic and the geometric multiplicity of the eigenvalue 0 of $A$ is then as well equal to some $\widetilde c$
with $0<\widetilde c\leq c<d$. Finally,
 $\sigma(A)\subseteq\sigma(A_M)\subseteq\{(-\infty,0)+i\R\}\cup\{0\}$ by (M1) and Bernstein~\cite[Proposition 12.9.16]{Bernstein2009}. Thus,   $Y$  is non-stationary by \Cref{ThmCanonFormCoint},
 and  $Y$ has stationary increments with $\beta^{\mathsf{T}}Y$ stationary by \Cref{thmCoIntMARep}.
\end{proof}
\Cref{Theorem 4.6} for an MCARMA processes  complements \Cref{defCointegratedMCARMA} for a state space model.
Conditions (M1)-(M4) imply that the  geometric and the algebraic multiplicity of the eigenvalue 0 is $c$.
Finally, we make some remarks on the last result and its implications on cointegration for MCARMA models.

\begin{remark}
The assumptions in \Cref{Theorem 4.6} have the following relevance under (M4) which replaces the assumption of minimality:
\begin{itemize}
%\vspace{-6pt}
\item Assumption (M1) guarantees that the process is non-stationary.
%\vspace{-6pt}
\item Assumption (M2) guarantees that there exist linear combinations which are stationary.
  %  \vspace{-6pt}
\item Assumption (M3) guarantees that the process is integrated of order one and not of higher order.
 %   \vspace{-6pt}
\item Assumption (M4) guarantees that $0$ is not a zero of $Q(z)$.
%   \vspace{-6pt}
\end{itemize}
If the cointegration rank is zero, i.e. $P_p=0_{d\times d}$, we have no cointegration vector and thus, the process is not cointegrated. However, the process is integrated. On the other hand, if the rank of $P_p$ is equal to $d$, i.e. $P_p$ is of full rank, the process is stationary. This means that all eigenvalues have strictly negative real parts and (M1) is satisfied.  Additionally, (M3) is automatically satisfied, whereas (M2) is violated. Therefore cointegration arises when the rank of $P_p$ satisfies $0<r<d$. Hence, it depends
essentially on the matrix $P_p$ if we have a stationary, integrated or even cointegrated MCARMA process.
\end{remark}

\begin{remark}
In this subsection we described conintegrated MCARMA processes which are in particular integrated. However, there are further possibilities to define integrated MCARMA processes.
We present two natural ways.
\begin{enumerate}
\item The first method starts with a stationary $d$-dimensional MCARMA$(p,q)$ process $Y$. The integrated process is defined by integration of $Y$, namely $I(t)=\int_0^tY(s)\,\dif s$. Assume that the process $Y$ satisfies $P(D)Y(t)=Q(D)DL(t)$ and define $P^\ast(z):=zP(z)$. Then the differential equation for the integrated process is
    $$P^*(D)I(t)=P(D)DI(t)=Q(D)DL(t).$$
     The order of the polynomial $P^\ast(z)$ is $p^\ast:=p+1$. Obviously $(I(t))_{t\geq0}$ is then an MCARMA process with parameters $(p^\ast,q)$ and $p^\ast>q$  as well.
\item The second method uses a non-stationary $d$-dimensional MCARMA$(p,q)$ process $I:=(I(t))_{t\geq0}$ where $DI$ is stationary. Assume, that the process $I$ satisfies $P(D)I(t)=Q(D)DL(t)$, $t\geq 0$, and define $Q^\ast(z):=zQ(z)$. Then we have
    $$P(D)DI(t)=D[P(D)I(t)]=D[Q(D)DL(t)]=Q^\ast(D)DL(t).$$
    Clearly, $DI(t)=:Y(t)$ is an MCARMA$(p,q+1)$ process. Again, this implies that we need the assumption $p>q+1$.
\end{enumerate}
The different definitions of the integrated MCARMA process $I$ are not equivalent. Both have in common that $DI$ is stationary and $I$ is an MCARMA process, whereas in the first definition (i) there exist no $\beta$ so that $\beta^\mathsf{T} I$ is stationary implying that $I$ is not cointegrated. In  (ii), $P_p$ is not fixed to be zero, thus we allow the process to be cointegrated.
\end{remark}

%% ==============================
\section{Cointegrated state-space processes sampled equidistantly}
\label{sec:KalmanFilterforCointegratedProcesses}
%% ==============================

In order to estimate the model parameters  of the state-space model $Y$ from observations in discrete time, the probabilistic properties of sampled versions of
cointegrated state-space processes are of special interest. Therefore, we investigate in this section cointegrated state-space processes  sampled
at an equidistant time-grid with distance $h>0$. The cointegration property of the continuous-time process $Y$ transfers directly
 to its sampled version $Y^{(h)}=(Y(nh))_{n\in\N_0}$ by Comte \cite[Proposition 3]{Comte1999}.
Based on the results presented here, Fasen and Scholz~\cite{Fasen:Scholz:2016b} develop a quasi-maximum-likelihood method to estimate
the parameters of a cointegrated state space process by observations $Y(h),\ldots,Y(nh)$.
Before we start we present some assumptions of this section.

\begin{assumptionletter} \label{AssumptionB}
Let $0<c<d$ and
\beao
    Y(t)=C_1X_1(t_0) +C_1B_1L(t) +\int_{-\infty}^tC_2\exp(A_2(t-u))B_2\, \dif L(u)), \quad t\geq 0,
\eeao
be a cointegrated state process with $C_1\in M_{d,c}(\R), B_1\in M_{c,m}(\R)$, $A_2\in M_{N-c,N-c}(\R)$,  $C_2\in M_{d,N-c}(\R)$ and  $B_2\in M_{N-c,m}(\R)$, Furthermore,
suppose the following conditions are satisfied:
\begin{enumerate}[(B1)]
    \item $\rank C_1=\rank B_1=c$.
    \item $\sigma(A_2)\subset \{(-\infty,0)+i\R\}$ and $(A_2,B_2,C_2)$ is minimal.
    \item $(A,B,C)$ is defined as in \Cref{LemmaMinimality}.
    \item \label{B5} $\rank C=d$.
    \item $X_1(0)$ is independent of $\sigma(L(t):t\geq 0)$ and $\E\|X_1(0)\|^2<\infty$.
    \item For any $\lambda,\lambda'\in\sigma(A)=\sigma(A_2)\cup\{0\}$ and any $k\in\Z\backslash\{0\}$: $\lambda-\lambda'\not=2\pi k/h$ (Kalman-Bertram criterion).
 \end{enumerate}
\end{assumptionletter}
Throughout the rest of the paper we assume that \autoref{ALevy} and \ref{AssumptionB} always  hold.

\subsection{Probabilistic properties of a cointegrated state space processes sampled equidistantly} \label{Section 5.1}

First of all,  we derive a state space representation of the discrete-time process $Y^{(h)}$.
Therefore let us define the sequence
\begin{eqnarray}
\label{eqNoiseTermR}
    \xi_n^{(h)}:=\int_{(n-1)h}^{nh}\exp(A(nh-u))B\,\dif L(u)=\begin{pmatrix}B_1 \Delta L^{(h)}_n \\\int_{(n-1)h}^{nh}\exp({A_2(nh-u)})B_2\,\dif L(u) \end{pmatrix}
    =:\begin{pmatrix} \xi_{1,n}^{(h)} \\ \xi_{st,n}^{(h)} \end{pmatrix},
\end{eqnarray}
where $\Delta L^{(h)}_n :=L(nh)-L((n-1)h)$, $n\in\Z$.
The sequence $\xi^{(h)}=(\xi_n^{(h)})_{n\in\Z}$ is  i.i.d.  with mean zero and covariance matrix
\begin{eqnarray*}
\label{eqCovMatrixIIDseqsampledCoint}
{\Sigma}^{(h)}_\xi:=
\E( \xi_n^{(h)}\xi_n^{(h)\mathsf{T}})=\int_0^h\begin{pmatrix} B_1\Sigma_LB_1^\mathsf{T} & \e^{A_2u}B_2\Sigma_LB_1^\mathsf{T}
\\
B_1\Sigma_LB_2^\mathsf{T}\e^{A_2^\mathsf{T}u} & \e^{A_2u}B_2\Sigma_LB_2^\mathsf{T}\e^{A_2^\mathsf{T}u}\end{pmatrix}\,\dif u.
\end{eqnarray*}
The matrix $\Sigma^{(h)}_\xi$ is positive definite due to Bernstein~\cite[Theorem 12.6.18]{Bernstein2009}, $\Sigma_L$ positive definite and due to the controllability of $(A,B,C)$ implied by the minimality of $(A,B,C)$  given in \Cref{LemmaMinimality}.
Define $\xi_n^*=(\Sigma_\xi^{(h)})^{1/2}\xi_n^{(h)}$, $n\in\Z$. Then $(\xi_n^*)_{n\in\Z}$ is an iid sequence with $\E(\xi_n^*)=0_N$ and $\Var(\xi_n^*)=I_N$.
Due to \eqref{defObservEqCoint} the discrete-time process $Y^{(h)}$ is then a solution of the discrete-time state space model
\begin{align} \label{Y discrete state}
\begin{array}{rl}
X_n^{(h)}
&=\exp(Ah)X_{n-1}^{(h)}+(\Sigma_\xi^{(h)})^{1/2}\xi_n^*,\\
Y^{(h)}_n&=C X_{n}^{(h)}.
\end{array}
\end{align}
 From this representation we see clearly the connection of the cointegration property of the continuous-time process $Y$
  with the discrete-time process $Y^{(h)}$. The continuous-time model has transition matrix $A$ and the discrete-time model has transition matrix $\exp(A)$. But
   $\exp(A)$ has eigenvalues equal to one (a unit root) iff $A$ has eigenvalues equal to zero.

\begin{lemma} \label{lemma 5.1a}
The discrete-time state space model $(\exp(A),\Sigma_\xi^{1/2}C,\xi^*)$ as given in \eqref{Y discrete state} is observable and controllable and hence, minimal
with McMillian degree $N$.
\end{lemma}
\begin{proof}
First, note that $(A,B,C)$ is minimal due to \autoref{AssumptionB} and \Cref{LemmaMinimality}.
The rest is a consequence of the Kalman-Bertram criterion (B6) and Schlemm and Stelzer \cite[Proposition 3.10]{SchlemmStelzer2012}.
\begin{comment}
 Next, there exists a $T\in GL_N(\R)$ such that $J=TAT^{-1}$ is in Jordan normal form.
Then $(J,TB,CT^{-1})$ is as well minimal due to  Hannan and Deistler \cite[Theorem 2.3.4]{hannandeistler2012}, and controllable and observable due to  Hannan and Deistler \cite[Theorem 2.3.3]{hannandeistler2012}. Furthermore, $$(T\exp(A)T^{-1},T(\Sigma_\xi^{(h)})^{1/2},CT^{-1})=(\exp(J),T(\Sigma_\xi^{(h)})^{1/2},CT^{-1})$$
is minimal iff $(\exp(A),\Sigma_\xi^{1/2}C)$ is minimal.

Since $(\Sigma_\xi^{(h)})^{1/2}$ has full rank $N$ both $(\exp(J),T(\Sigma_\xi^{(h)})^{1/2},CT^{-1})$ and $(\exp(A),(\Sigma_\xi^{(h)})^{1/2}C)$ are obviously controllable.
It remains to show the observability  of $(\exp(J),T(\Sigma_\xi^{(h)})^{1/2},CT^{-1})$ because then we can conclude from \cite[Theorem 2.3.3]{hannandeistler2012}
the minimality of $(\exp(J),T(\Sigma_\xi^{(h)})^{1/2},CT^{-1})$. But if $\lambda$ is an eigenvalue of $A$ and hence, $J$ with geometric multiplicity $v$ then $\exp(\lambda)$
is an eigenvalue of $\exp(A)$ and hence, $\exp(J)$ with geometric multiplicity $v$. Thus, looking at the structure of $\mathcal{O}_{(CT^{-1})J}(\lambda)$ and
$ \mathcal{O}_{(CT^{-1})J}(\lambda)$ we receive that $\rank \mathcal{O}_{(CT^{-1})J}(\lambda)=\rank \mathcal{O}_{(CT^{-1})\exp(J)}(\lambda)$ which results with \Cref{Lemma:control} in the controllability of $(\exp(J),T(\Sigma_\xi^{(h)})^{1/2},CT^{-1})$.
\end{comment}
\end{proof}
Plugging $t=nh$ in the definition of $Y(t)$ results in
\beam \label{Y discrete}
    Y_n^{(h)}=C_1X_1(0) +C_1B_1L_n^{(h)}+Y_{st,n}^{(h)}
\eeam
where $(L_n^{(h)})_{n\in\Z}:=(L(nh))_{n\in\Z}$ and
\beam \label{Y_st}
    Y_{st,n}^{(h)}:=Y_{st}(nh)=\int_{-\infty}^{nh}C_2\exp(A_2(t-u))B_2\, \dif L(u)=\sum_{j=-\infty}^nC_2\exp(A_2h(n-j))\xi_{st,j}^{(h)}
\eeam
 is a discrete-time stationary MA process.
This shows that we have also a separation of the stationary part and the integrated part for the discrete-time model $Y^{(h)}$ as for the
continuous-time model $Y$.

\begin{lemma} \label{Lemma 5.1}
The process $Y^{(h)}$ is a cointegrated solution of the discrete-time state space model \linebreak $(\exp(A),\Sigma_\xi^{1/2}C,\xi^*)$
with cointegration space $C_1^\perp$ and
cointegration rank $r=d-c$.
\end{lemma}
\begin{proof}
Is a direct conclusion of  \eqref{Y discrete}, \Cref{Lemma:Appendix} and \Cref{defCointegratedMCARMA}.
\end{proof}

\subsection{Linear innovations of cointegrated state space processes sampled equidistantly}

 The state space representation of $Y^{(h)}$ given in \eqref{Y discrete state} and the representation in \eqref{Y discrete}  have its limits
 for statistical inference since the noise $\xi^{(h)}$, respectively $\xi^*$ is not observable.
For this reason we derive an alternative representation of $Y^{(h)}$, the so called innovation form,
with the help of the linear innovations.

\begin{definition}
\label{defLinearInnovations}
The linear innovations $\varepsilon^{(h)}=(\varepsilon_n^{(h)})_{n\in\N}$ of $Y^{(h)}$ are defined by
\begin{eqnarray*}
\varepsilon_n^{(h)}=Y_n^{(h)}-P_{n-1}Y_n^{(h)},
\end{eqnarray*}
where $P_n$ is the orthogonal projection onto $\overline{\text{span}}\{Y_i^{(h)}:-\infty<i\leq n\}$ and the closure is taken in the Hilbert space of square-integrable random variables with inner product $(X,Y)\mapsto \E \langle X,Y\rangle$.
\end{definition}
To receive the linear innovations of $Y^{(h)}$  we apply the  Kalman filter.
The applicability of the Kalman filter for the sampled cointegrated state-space model $Y^{(h)}$ can be easily checked by adapting the results in  Chui and Chen \cite[Chapter 6]{ChuiChen2009} to the case of unit roots. The complete calculations can be found in Scholz \cite[Section 4.6]{Scholz}.
Important is here as well the observability and controllability of the state space model \eqref{Y discrete state}
 given in \Cref{lemma 5.1a} and assumption (B4).
 In the next proposition, we sum up the basic results concerning the Kalman filter.
\begin{proposition}
\label{propKalmanFilterPropCoint}
The discrete-time algebraic Riccati equation
\begin{align*}
\Omega^{(h)}&:=\mathrm{e}^{Ah}\Omega^{(h)} \mathrm{e}^{A^\mathsf{T}h}-\mathrm{e}^{Ah}\Omega^{(h)} C^\mathsf{T}(C\Omega^{(h)} C^\mathsf{T})^{-1}C\Omega^{(h)}  \mathrm{e}^{A^\mathsf{T}h}+{\Sigma}^{(h)}_\xi\in M_{N,N}(\R) \label{eqRiccatiAlgeqCointmod}
\intertext{has a positive definite solution $\Omega^{(h)}$ and the steady state Kalman gain matrix $K^{(h)}$ is given by}
K^{(h)}&:=\mathrm{e}^{Ah}\Omega^{(h)} C^\mathsf{T}(C\Omega^{(h)} C^\mathsf{T})^{-1} \in M_{N,d}(\R).
\end{align*}
The linear innovations $\varepsilon^{(h)}$ of $Y^{(h)}$ are the unique stationary solution of the linear state-space model
\begin{equation}
\begin{split}
\widehat{X}_n^{(h)}&=\big(\mathrm{e}^{Ah}-K^{(h)}C\big)\widehat{X}_{n-1}^{(h)}+K^{(h)}Y_{n-1}^{(h)},\\
\varepsilon_n^{(h)}&=Y_n^{(h)}-C\widehat{X}_n^{(h)}, \quad n \in\N. \label{eqLinearInnovations1}
\end{split}
\end{equation}
\begin{comment}
Then, we have a moving average representation for the linear innovations given by
\begin{eqnarray*}
\varepsilon_n^{(h)}&=&\left(I_d-C\big[I_N-\big(\mathrm{e}^{Ah}-K^{(h)}C\big)B\big]^{-1}K^{(h)}B\right)Y_n^{(h)}
\\
&=&Y_n^{(h)}-C\sum_{i=1}^\infty\big(\mathrm{e}^{Ah}-K^{(h)}C\big)^{i-1}K^{(h)}Y_{n-i}^{(h)},
\label{eqLinearInnovations2}
\end{eqnarray*}
where $B$ denotes the backshift operator defined as $BY_n^{(h)}=Y_{n-1}^{(h)}$.
\end{comment}
Thus, the innovation form of $Y^{(h)}$ is
\begin{equation}
\begin{split}
\widehat{X}_n^{(h)}&=\e^{Ah}\widehat{X}_{n-1}^{(h)}+K^{(h)}\varepsilon_{n-1}^{(h)},  \\
Y_n^{(h)}&=C\widehat{X}_n^{(h)}+\varepsilon_n^{(h)}, \quad n \in\N.\label{eqInnovationsForm1}
\end{split}
\end{equation}
The covariance matrix of the innovations is given by
\begin{eqnarray*}
\label{eqCovInnov}
V^{(h)}=\E\Big[\varepsilon_n^{(h)}\varepsilon_n^{(h)\mathsf{T}}\Big]=C\Omega^{(h)} C^\mathsf{T}\in\Sympp_d.
\end{eqnarray*}
\end{proposition}
The discrete-time state space model  \eqref{eqInnovationsForm1} is as well minimal with McMillian degree $N$ due to \Cref{lemma 5.1a}.

\subsection{Probabilistic properties of the linear innovations of cointegrated state space processes sampled equidistantly} \label{Section 3.1}

The aim of this subsection is to derive probabilistic properties of the linear innovations  $\varepsilon^{(h)}$.
Since $Y^{(h)}$ is non-stationary it is not obvious which initial condition $\widehat X_1^{(h)}$ results in
the stationarity of $\varepsilon^{(h)}$. We will present an initial condition and a different representation of $\varepsilon^{(h)}$ which as well
 shows nicely that $\varepsilon^{(h)}$ is indeed stationary. From this we directly receive an error correction form.

Therefore, primary  we investigate the properties of the Kalman filter
\begin{eqnarray} \label{eqTransferFunctionKz}
g(z)&:=&I_d-C\big[I_N-\big(\exp(Ah)-K^{(h)}C\big)z\big]^{-1}K^{(h)}z
=I_d-C\sum_{j=1}^\infty\big(\exp(Ah)-K^{(h)}C\big)^{j-1}K^{(h)}z^j \nonumber\\
&=&I_d-\sum_{j=1}^{\infty}G_{j-1}z^{j},\quad z\in\C,
\end{eqnarray}
 with $G_j=C\big(\exp(Ah)-K^{(h)}C\big)^{j}K^{(h)}$. Due to the results known about the Kalman filter  $|\lambda_{max}(\exp(Ah)-K^{(h)}C)|<1$ (for the cointegrated setting see Scholz~\cite[Lemma 4.6.7]{Scholz} and for the stationary case see Chui and Chen, \cite[Lemma 6.8]{ChuiChen2009}), and hence $I_N-\big(\mathrm{e}^{Ah}-K^{(h)}C\big)$ is indeed invertible.
 Rewriting \eqref{eqLinearInnovations1}
 we receive
 \beao
   \varepsilon^{(h)}_n&=&Y_n^{(h)}-C(\e^{A h}-K^{(h)} C)^{n-1} \widehat X_1^{(h)}- \sum_{j=1}^{n-1} C(\e^{A h}-K^{(h)} C)^{j-1}K^{(h)}
     Y_{n-j}^{(h)}\\
    &=& \left[Y_n^{(h)}- \sum_{j=1}^{n-1} G_{j-1} Y_{n-j}^{(h)}\right]-G_{n-1} \widehat X_1^{(h)}.
 \eeao
 We will show that $\varepsilon^{(h)}_n=g(B)Y_{n}^{(h)}$ where $B$ denotes the Backshift operator. In the stationary case this representation
 is straightforward but it is not as obvious for the  non-stationary sequence $Y^{(h)}$. Therefore, we first show that
\begin{eqnarray}
\label{eqTransferFunctionK1}
g(1)=I_d-C\big[I_N-\big(\exp(Ah)-K^{(h)}C\big)\big]^{-1}K^{(h)}\in M_{d}(\R)
\end{eqnarray}
contains the information about the cointegration as cointegration space and rank.
Initially, we adapt  Ribarits and Hanzon, \cite[Lemma 3.1]{RibaritsHanzon2011} in order to characterize the rank of $g(1)$.

\begin{lemma} \label{Lemma 5.5} $\mbox{}$
\begin{itemize}
\item[(a)] $g(1)C_1=0_{d\times c}$.
\item[(b)] $\rank g(1)=d-c.$
\end{itemize}
\end{lemma}
\begin{proof}
First, we write $K^{(h)}=( K_1^{(h)}\,\, K_2^{(h)})$ with $K_1^{(h)}\in M_{N,c}(\R)$ and $K_2^{(h)}\in M_{N,d-c}(\R)$. The matrix $K^{(h)}$ has as product of non-singular matrices and
a matrix with full rank as well full rank $d$. Thus, $K_1^{(h)}$ has full rank $c$ and $K_2^{(h)}$ has full rank $d-c$.
 Applying the decoupling into subsystems to \eqref{eqTransferFunctionK1} we obtain for $g(1)$ the representation
\begin{eqnarray*}
\label{eqDecoupledk1}
g(1)=I_d-\begin{pmatrix}C_1 & C_2\end{pmatrix} \begin{pmatrix}K_1^{(h)}C_1 & K_1^{(h)}C_2\\ K_2^{(h)}C_1 & K_2^{(h)}C_2+I_{N-c}-\exp(A_2h)\end{pmatrix}^{-1} \begin{pmatrix}K_1^{(h)}\\  K_2^{(h)}\end{pmatrix}.
\end{eqnarray*}
 As  mentioned in the beginning of \Cref{Section 3.1} the inverse
in the definition of $g(1)$ is well-defined.
Moreover, since $K_1^{(h)}$ and $C_1$ have full rank $c$, the $c\times c$ matrix $K_1^{(h)}C_1$ is regular and has also rank $c$. Thus, due to Bernstein \cite[Proposition 2.8.3]{Bernstein2009}
we can apply the Matrix Inversion Lemma \cite[Proposition 2.8.7]{Bernstein2009} and obtain
\begin{eqnarray} \label{eq_2}
g(1)=I_d-\begin{pmatrix}C_1 & C_2\end{pmatrix}\cdot \begin{pmatrix}M_{11} & M_{12}\\M_{21} & M_{22} \end{pmatrix}\cdot\begin{pmatrix}K_1^{(h)}\\  K_2^{(h)}\end{pmatrix},
\end{eqnarray}
where
\begin{align*}
Q&:=I_{{N-c}}-\exp({A_2h}) +K_2^{(h)}C_2-K_2^{(h)}C_1(K_1^{(h)}C_1)^{-1}K_1^{(h)}C_2,\\
M_{11}&:=(K_1^{(h)}C_1)^{-1}+(K_1^{(h)}C_1)^{-1}K_1^{(h)}C_2Q^{-1}K_2^{(h)}C_1 (K_1^{(h)}C_1)^{-1},
\\
M_{12}&:=-(K_1^{(h)}C_1)^{-1}K_1^{(h)}C_2Q^{-1},
\\
M_{21}&:=-Q^{-1}K_2^{(h)}C_1(K_1^{(h)}C_1)^{-1},
\\
M_{22}&:=Q^{-1}.
\end{align*}
As already stated above $K_1^{(h)}C_1$ is a regular  $c\times c$ matrix where $K_1^{(h)}$ and $C_1$ have full rank $c$ and consequently, $C_1(K_1^{(h)}C_1)^{-1}K_1^{(h)}$ has rank $c$.
Define $P:=I_d-C_1(K_1^{(h)}C_1)^{-1}K_1^{(h)}\in M_{d,d}(\R)$ which is obviously idempotent since $P^2=P$ holds.
Moreover, $I-P=C_1(K_1^{(h)}C_1)^{-1}K_1^{(h)}\in M_{d,d}(\R)$ has  rank $c$
 so that due to the rank equation for an idempotent matrix,  $\rank P=d-c$ (see  Bernstein \cite[Fact 3.12.9]{Bernstein2009}).
 Then we can rewrite the matrix $g(1)$ once more using  representation \eqref{eq_2} and obtain
\begin{align*}
g(1)=&I_d-\begin{pmatrix}C_1 & C_2\end{pmatrix}\cdot M\cdot\begin{pmatrix}K_1^{(h)} \\ K_2^{(h)}\end{pmatrix}
\\
=&I_d-C_1 M_{11}K_1^{(h)} - C_2 M_{21}K_1^{(h)} -C_1 M_{12}K_2^{(h)} - C_2 M_{22}K_2^{(h)}
\\
=&(I_d-C_1(K_1^{(h)}C_1)^{-1}K_1^{(h)})-C_2Q^{-1}K_2^{(h)}
+C_1(K_1^{(h)}C_1)^{-1}K_1^{(h)}C_2Q^{-1}K_2^{(h)}
\\
&+C_2Q^{-1}K_2^{(h)}C_1(K_1^{(h)}C_1)^{-1}K_1^{(h)}
-C_1(K_1^{(h)}C_1)^{-1}K_1^{(h)}C_2Q^{-1}K_2^{(h)}C_1(K_1^{(h)}C_1)^{-1}K_1^{(h)}
\\
=&P-(I_d-C_1(K_1^{(h)}C_1)^{-1}K_1^{(h)})C_2Q^{-1}K_2^{(h)}(I_d-C_1(K_1^{(h)}C_1)^{-1}K_1^{(h)})
\\
=&P-PC_2Q^{-1}K_2^{(h)}P.
\end{align*}
From the representation of $P$ we receive $PC_1=0_{d\times c}$ and hence, $g(1)C_1=0_{d\times c}$ which is (a).

A further conclusion of the Matrix Inversion Lemma \cite[Corollary 2.8.8]{Bernstein2009}  gives
\begin{align*}
Q^{-1}=&[I_{{N-c}}-\exp({A_2h})+K_2^{(h)}P^2C_2]^{-1}
\\
=&\big(I_{{N-c}}-\exp({A_2h})\big)^{-1}\\
&-\big(I_{{N-c}}-\exp({A_2h})\big)^{-1}K_2^{(h)}P [I_d+PC_2\big(I_{{N-c}}-\exp({A_2h})\big)^{-1}K_2^{(h)}P]^{-1}PC_2\big(I_{{N-c}}-\exp({A_2h})\big)^{-1}.
\end{align*}
For the sake of brevity, we write $R:=C_2\big(I_{{N-c}}-\exp({A_2h})\big)^{-1}K_2^{(h)}$. Substituting the previous result into the formula for $g(1)$ leads to
\begin{align*}
g(1)&=P-PRP+PRP(I_d+PRP)^{-1}PRP
=P-PRP+(PRP)^2(I_d+PRP)^{-1}
\intertext{where we used the fact that $(I_d+AB)^{-1}A=A(I_d+BA)^{-1}$ for matrices $A(:=PRP)$ and $B(:=I_d)$  (see  \cite[Fact 2.16.16]{Bernstein2009}). Then $P^2=P$ results in}
g(1)&=[(P-PRP)(P+PRP)+(PRP)^2](I_d+PRP)^{-1}
=P(I_d+PRP)^{-1}.
\end{align*}
 We can conclude that
$\rank g(1)=\rank P(I_d+PRP)^{-1}=\rank P=d-c$
which is (b).
\end{proof}

\begin{lemma} \label{Lemma 5.6}
Define ${K}_j=\sum_{k=j}^{\infty}G_k=C\sum_{k=j}^\infty\big(\exp(Ah)-K^{(h)}C\big)^{k}K^{(h)}$ for $j\in\N_0$ and
$ k(z)=\sum_{j=1}^\infty  K_jz^j$, $z\in\C$.
Then
\beao
   g(z)= g(1)z+I_d(1-z)+{k}(z)(1-z), \quad z\in\C.
\eeao
\end{lemma}
\begin{proof}
First of all, note that $g(1)=I_d-K_0$ and $ K_j- K_{j+1}=G_j$. Then
\beao
    k(z)(1-z)=\sum_{j=1}^\infty K_jz^j-\sum_{j=1}^\infty K_jz^{j+1}
        =\sum_{j=1}^\infty( K_j- K_{j-1})z^j+ K_0z
        =-\sum_{j=1}^\infty G_{j-1}z^j+ K_0z.
\eeao
Hence,
\beao
    g(1)z+I_d(1-z)+{k}(z)(1-z)=I_dz- K_0z+I_d-I_dz-\sum_{j=1}^\infty G_{j-1}z^j+ K_0z
        =I_d-\sum_{j=1}^\infty G_{j-1}z^j=g(z).
\eeao
\end{proof}

Now we have derived every important property of the Kalman filter $g(z)$ which we require to present the initial condition $\widehat X_1^{(h)}$
giving us the stationary linear innovation $\varepsilon^{(h)}$ and their representation. However, we need some further notation.
It is important to know that the stationary process $Y_{st}(t)$ can be defined for $t\in\R$ as $Y_{st}(t)=\int_{-\infty}^tf_{st}(t-s)\,dL(s)$ with
$f_{st}(u)=C_{2}\exp(A_{2}u)B_{2}\1_{[0,\infty)}(u)$, and $Y_{st,n}^{(h)}$ can be defined for $n\in\Z$ as $Y_{st,n}^{(h)}=Y_{st}(nh)$.
Then we have an adequate definition of $\Delta Y^{(h)}_n:=Y^{(h)}_n-Y^{(h)}_{n-1}$ for negative values $n$ as well as
 $\Delta Y^{(h)}_n=\int_{-\infty}^{nh}f_{\Delta}(nh-s)\,d L(s)$, $n\in\Z$,
with $f_{\Delta}(u)=f_{st}(u)-f_{st}(u-h)+C_1B_1\1_{[0,h)}(u)$.
Finally, we are able to extend $Y^{(h)}_n$ on the negative integers as
\beao
    Y^{(h)}_{-n}:=Y(-nh):=C_{1}X_1(0)+Y_{st,0}^{(h)}-\sum_{j=0}^{n-1}\Delta Y^{(h)}_{-j}
        =C_{1}X_1(0)+L^{(h)}_{-n}+Y_{st,-n}^{(h)} \quad \text{ for } n\in\N_0.
\eeao
%where  $Y_{st}^{(h)}=(Y_{st}(nh))_{n\in\Z}$ and $L^{(h)}=(L(nh))_{n\in\Z}$.
\begin{theorem} \label{theorem 5.7}
The initial condition in the Kalman filter is $$\widehat X_1^{(h)}=\sum_{j=0}^{\infty} (\e^{A h}-K^{(h)} C)^{j}K^{(h)}
     Y^{(h)}_{-j}$$
Moreover, there exists an $\alpha, C_1^{\perp}\in M_{d, d-c}(\R)$ with rank $d-c$ so that
\begin{eqnarray*}
%\label{eqPreTFECM}
\varepsilon_n^{(h)}=g(B)Y_n^{(h)}=-\alpha C_1^{\perp\,\mathsf{T}}Y_{n-1}^{(h)}+\Delta Y_n^{(h)}+{k}(B)\Delta Y_n^{(h)}
\end{eqnarray*}
where $B$ denotes the backshift operator.
\end{theorem}
\begin{proof}
Since $\rank g(1)=d-c$ (see \Cref{Lemma 5.5}(b)) there exists $\alpha,\beta\in M_{d, d-c}(\R)$ with full row rank $d-c$ such that $g(1)=-\alpha\beta^\mathsf{T}$.
Furthermore, due to \Cref{Lemma 5.5}(a), $0_{d\times c}=g(1)C_1=-\alpha\beta^{\mathsf{T}}C_1$. Since $\alpha$ has full rank $d-c$,
$\beta^{\mathsf{T}}C_1=0_{d-c\times c}$ and thus, we denote $\beta$ as $C_1^{\perp}$.
Then \Cref{Lemma 5.6} leads to
\begin{eqnarray*}
%\label{eqPreTFECM}
Y_n^{(h)}-\sum_{j=1}^\infty G_{j-1}Y_{n-j}^{(h)}&=&g(B)Y_n^{(h)}=\left[g(1)B+I_d(1-B)+{k}(B)(1-B)\right]Y_n^{(h)}\\
            %\left[-\alpha\beta^\mathsf{T}B+I_d(1-B)-\widetilde{k}(B)(1-B)\right]Y_n^{(h)}, \nonumber\\
                   &=&-\alpha C_1^\mathsf{\perp T}Y_{n-1}^{(h)}+\Delta Y_n^{(h)}+{k}(B)\Delta Y_n^{(h)}\\
                   &=&-\alpha C_1^\mathsf{\perp T}Y_{st,n-1}^{(h)}+\Delta Y_n^{(h)}+{k}(B)\Delta Y_n^{(h)}.
\end{eqnarray*}
From this we see that $(g(B)Y_n^{(h)})_{n\in\N}$ is stationary and with $\widehat X_1^{(h)}$ as given above we receive
$\varepsilon_n^{(h)}=g(B)Y_n^{(h)}$.
\end{proof}
We are now able to state some useful properties of the linear innovations.

\begin{proposition}
\label{PropLinearInnovationsProperties}
The linear innovations $\varepsilon^{(h)}$  are a stationary, ergodic and uncorrelated sequence with finite second moments.
\end{proposition}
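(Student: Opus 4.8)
The plan is to establish the four asserted properties---stationarity, uncorrelatedness, finiteness of the second moments, and ergodicity---roughly in increasing order of difficulty, relying on the representations of $\varepsilon^{(h)}$ already at our disposal. Stationarity is immediate: by \Cref{propKalmanFilterPropCoint} the innovations are the \emph{unique stationary} solution of the state-space system \eqref{eqLinearInnovations1}, and equivalently the moving-average representation in \Cref{LemRep}(b) exhibits $\varepsilon^{(h)}$ as a time-invariant, causal linear filter of the strictly stationary (in fact i.i.d.) sequence $(R_n^{(h)})_{n\in\N}$ from \Cref{LemSampledProcessSpecRepr}, which is stationary.

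Uncorrelatedness follows directly from the defining projection property in \Cref{defLinearInnovations}. For $m<n$ I would note that $\varepsilon_m^{(h)}=Y_m^{(h)}-P_{m-1}Y_m^{(h)}$ lies (componentwise) in $\overline{\text{span}}\{Y_i^{(h)}:i\leq m\}\subseteq\overline{\text{span}}\{Y_i^{(h)}:i\leq n-1\}$, whereas by construction $\varepsilon_n^{(h)}$ is orthogonal to this closed subspace; hence $\E[\varepsilon_n^{(h)}\varepsilon_m^{(h)\mathsf{T}}]=0_{d\times d}$, and by symmetry the same holds for $m>n$. The finiteness of the second moments is likewise already contained in \Cref{propKalmanFilterPropCoint}, where the innovation covariance is identified as $V^{(h)}=\E[\varepsilon_n^{(h)}\varepsilon_n^{(h)\mathsf{T}}]=C\Omega^{(h)}C^\mathsf{T}\in\Sympp_d$, a finite positive-definite matrix; alternatively it follows from the geometric decay of the filter coefficients established below together with the square-integrability of $(R_n^{(h)})$.

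The ergodicity is the substantive part. Here I would start from \Cref{LemRep}(b), which writes
\begin{align*}
\varepsilon_n^{(h)}=\sum_{j=0}^{\infty}\begin{pmatrix}\widetilde{K}_j C_1B^{j} & \sum_{k=0}^{j} L_k C_2\e^{A_2 h(j-k)}B^{j}\end{pmatrix}R_{n}^{(h)},
\end{align*}
that is, as a one-sided, shift-equivariant measurable transformation of the i.i.d. sequence $(R_n^{(h)})$. The first task is to confirm that this series genuinely defines such a transformation: the coefficients $\widetilde{K}_j$ decay geometrically because $|\lambda_{\max}(\mathrm{e}^{Ah}-K^{(h)}C)|<1$, while the convolutions $\sum_{k=0}^{j}L_kC_2\e^{A_2h(j-k)}$ decay geometrically because both $(L_k)$ and $\e^{A_2h(j-k)}$ do (using $|\lambda_{\max}(\e^{A_2h})|<1$ from \Cref{ThmCanonFormCoint}). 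Hence the coefficient sequence is absolutely summable, the series converges almost surely and in $L^2$, and $\varepsilon^{(h)}$ is a measurable factor of the Bernoulli shift generated by $(R_n^{(h)})$.

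The second task is to invoke the standard ergodic-theoretic principle that an i.i.d. sequence generates an ergodic (indeed mixing) dynamical system and that any measurable, shift-commuting image of an ergodic process is again ergodic; this yields ergodicity of $\varepsilon^{(h)}$. I expect the main obstacle to be precisely this last step: one must formulate the factor-map argument carefully on the two-sided path space so that the causal filter is genuinely equivariant under the shift and the factor theorem applies, the advantage of this route being that ergodicity passes to \emph{infinite} (but measurable) functions of the past, unlike the finite-past mixing arguments used earlier. An alternative route, matching the style used in \Cref{LemMixing}, is to deduce exponential strong mixing of $\varepsilon^{(h)}$ from that of $Y_2^{(h)}$ together with the i.i.d. structure of $(R_{n,1}^{(h)})$ via the filter representation in \Cref{LemRep}(a) and Bradley \cite[Remark 1.8 b)]{Bradley2007}, and then use the implication that strong mixing entails ergodicity.
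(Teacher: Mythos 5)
Your proposal is correct and follows essentially the same route as the paper: stationarity is read off from the error correction form / filter representation, finite second moments come from the exponential decay of the coefficients of $k(z)$ (equivalently from $V^{(h)}=C\Omega^{(h)}C^{\mathsf{T}}$), and ergodicity is obtained by exhibiting $\varepsilon_n^{(h)}$ as a measurable, shift-equivariant function of a stationary ergodic input and citing Bradley, the only cosmetic difference being that the paper factors through \Cref{LemRep}(a) and the process $Z_n^{(h)}=(C_1R_{n,1}^{(h)},Y_{n,2}^{(h)\mathsf{T}})^{\mathsf{T}}$ rather than through \Cref{LemRep}(b) and the i.i.d.\ sequence $(R_n^{(h)})$. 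Your explicit orthogonal-projection argument for uncorrelatedness is a sound addition; the paper leaves that property implicit in \Cref{defLinearInnovations}.
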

Before we prove this proposition we require an auxiliary result.

\begin{lemma} \label{Lemma 5.9}
Let $(\xi_n^{(h)})_{n\in\N_0}$ be given as in \eqref{eqNoiseTermR}.
Then there exists a matrix sequence $(\phi_j)_{j\in\Z}$ with $\phi_j\in M_{d,N}(\R)$ and $\|\phi_j\|\leq C\rho^j$ for some $0<\rho<1$
such that for $\Phi(z)=\sum_{j=0}^\infty\phi_jz^{j}$  we have
\beao
    \varepsilon_n^{(h)}=\Phi(B)\xi_n^{(h)}.
\eeao
\end{lemma}
\begin{proof}
Let us define
\beao
    Y_{1,n}^{(h)}:=C_1X_1(0)+C_1B_1L_n^{(h)}=C_1X_1(0)+C_1\sum_{j=1}^n\xi_{1,j}^{(h)}, \quad n\in\Z,
\eeao
such that $Y_n^{(h)}=Y_{1,n}^{(h)}+Y_{st,n}^{(h)}$.
Define $\psi_j:=C_2\exp(A_2hj)$ for $j\in\N_0$ and $\Psi(z)=\sum_{j=0}^\infty\psi_jz^j$, $z\in\C$. Then we obtain
$Y_{st,n}^{(h)}=\Psi(B)\xi_{st,n}^{(h)}$ by \eqref{Y_st}.
Applying \Cref{theorem 5.7} gives
\beao
    \varepsilon_n^{(h)}&=&g(B)Y_n^{(h)}=g(B)Y_{1,n}^{(h)}+g(B)Y_{st,n}^{(h)}\\
        &=&\Delta Y_{1,n}^{(h)}+{k}(B)\Delta Y_{1,n}^{(h)}+ g(B)Y_{st,n}^{(h)}\\
        &=&C_1\xi_{1,n}^{(h)}+{k}(B)(C_1\xi_{1,n}^{(h)})+ g(B)(\Psi(B)\xi_{st,n}^{(h)}).
\eeao
Thus, we define $\Phi_1(z)=C_1+k(z)$, $\Phi_2(z)=g(\Psi(z))$ and $\Phi(z)=(\Phi_1(z),\,\,\Phi_2(z))$ for $z\in\C$ then
$ \varepsilon_n^{(h)}=\Phi(B)\xi_n^{(h)}$.  It remains to show that the coefficients are exponentially bounded.
Therefore, we use that
$g(z)=I_d-\sum_{j=1}^{\infty}G_{j-1}z^{j}$
 with $G_j=C\big(\exp(Ah)-K^{(h)}C\big)^{j}K^{(h)}$. Since $|\lambda_{max}(\exp(Ah)-K^{(h)}C)|<1$ (see \cite[Lemma 4.6.7]{Scholz}) there exists a
 $0<\rho<1$ so that $\|G_j\|\leq C_1\rho^j$ for $j\in\N_0$. Furthermore, for $K_j=\sum_{k=j}^\infty G_k$ we receive  $\|K_j\|\leq C_2\rho^j$.
 Since $\sigma(A_2)\subseteq \{(-\infty,0)+i\R \}$ we get as well that there exists a $0<\widetilde\rho <1$ so that $\|\psi_j\|\leq C_2\widetilde\rho^j$ for $j\in\N_0$.
 Combining everything gives the statement.
\end{proof}

\begin{proof}[Proof of \Cref{PropLinearInnovationsProperties}]
By construction $\varepsilon^{(h)}$ are stationary, uncorrelated and have finite second moments. Due to the MA representation  given in \Cref{Lemma 5.9} with the iid sequence $(\xi_n^{(h)})_{n\in\N_0}$
and Krengel~\cite[Theorem 4.3 in Chapter 1]{Krengel1985}  the process $\varepsilon^{(h)}$ is as well ergodic.
\end{proof}
Note that if the L\'evy process is a Brownian motion then $Y^{(h)}$ and hence, $\epsilon^{(h)}$  are Gaussian. This even implies that the linear innovations are a sequence of i.i.d. random variables.

Finally, we have also derived the\\ \vspace*{-0.3cm} \\
\textbf{Error Correction Form}:
\begin{eqnarray*} \label{defTFECM}
%\label{eqTFECM}
\Delta Y_n^{(h)}=\alpha C_1^{\perp\,\mathsf{T}}Y_{n-1}^{(h)}-{k}(B)\Delta Y_n^{(h)}+\varepsilon_n^{(h)},\quad n\in\N.
\end{eqnarray*}
% Ribarits and Hanzon \cite{RibaritsHanzon2011} derive
%the so-called transfer function error correction form  for discrete-time state-space models.  Our error correction form has a similar form as the one of Ribarits and Hanzon but we %have different matrices.
This error correction form is similar to the well-known error correction form of Johansen~\cite{Johansen1991} for VAR models
(see as well Yap and Reinsel~\cite{Reinsel1997}).
However, there are only two essential differences making asymptotic theory based on the error correction form
more involved. On the one hand, $\varepsilon^{(h)}$
is  only a weak white noise and not an iid sequence; in general it is not even mixing (cf.
Schlemm and Stelzer \cite{SchlemmStelzer2012} for stationary processes). On the other hand, the filter $k(z)$ is an infinite linear filter
in contrast to the usual finite linear filter as, e.g., in~ \cite{LuetkepohlClaessen,Saikkonen92,YapReinsel95a} for VARMA models and  in \cite{Aoki90,BauerWagner2002}
for discrete-time state space models making calculations more involved. Thus, classical mixing limit results can not be used. The advantage of the error correction form is that it consists only
of stationary processes namely $(\Delta Y_n^{(h)})_{n\in\N}$, $(C_1^{\perp\,\mathsf{T}}Y_{n-1}^{(h)})_{n\in\N}$ and $(\varepsilon_n^{(h)})_{n\in\N}$
but still contains the cointegration information in $C_1^{\perp}$.
This form lays the foundation for statistical inference of cointegrated
state space processes similarly as in the discrete-time case; more details can be found in Fasen and Scholz~\cite{Fasen:Scholz:2016b}.

% The term $\alpha C_1^{\perp\,\mathsf{T}}Y_{n-1}^{(h)}$ is the error correction
% term.

\section{Conclusion} \label{Outlook}
The paper investigates the probabilistic properties of cointegrated state space processes and consists of three main parts.
The first part characterizes cointegrated solutions of linear state space models and shows that they can be written as the sum of a Lévy process
and a stationary state space process. This characterization provides the basis for any further studies
 of cointegrated state space processes. Furthermore, a canonical form for these processes is given. A main result of the second part is that
the class of cointegrated state space processes and MCARMA processes are equivalent and thereby a different characterization of cointegration is given
using the MCARMA structure. Finally, the last part of this paper derives a linear innovation and an error correction form
for cointegrated state space processes sampled equidistantly using the representation of cointegrated state space processes of the first of the
part paper.

The results of this paper lay the groundwork for statistical inference for cointegrated state space processes sampled equidistantly. On the one
hand, the canonical form is useful to construct a family of identifiable cointegrated state space processes which is essential for estimation.
On the other hand, based on the structure of the linear innovations it is possible to define a quasi-maximum-likelihood function and do maximum-likelihood
estimation. The asymptotic properties of these maximum-likelihood estimators are developed in  Fasen and Scholz~\cite{Fasen:Scholz:2016b}.
They show that the quasi-maximum likelihood estimator for the long-run parameter $C_1$ is super-consistent and asymptotically mixed normally distributed
whereas the quasi-maximum likelihood estimator for the other parameters, the short-run parameters are consistent and asymptotically normally distributed.

\begin{appendix}

\section{Appendix}

\begin{lemma} \label{Lemma:Appendix}
Let $C\in M_{d,c}(\R)$ with $\rank C=c$.
\begin{itemize}
    \item[(a)] Then there exists a unique matrix $U\in M_{d,c}(\R)$ with $U^{\mathsf{T}}U=I_c$ and a lower triangular matrix $L\in GL_c(\R)$
    with positive diagonal entries so that $C=UL$.
    \item[(b)] Then there exists a positive lower triangle matrix $U_L\in M_{d,c}(\R)$ with $U_L^{\mathsf{T}}U_L=I_c$ and a matrix $T\in GL_c(\R)$
so that $C=U_LT$.
\end{itemize}
\end{lemma}
\begin{proof} $\mbox{}$\\
(a) \, is a consequence of Gram-Schmidt orthonormalization. Define the matrix $E=[E_{ij}]_{1\leq i,j\leq c}\in M_{c,c}(\R)$
as $E_{ij}=\1_{\{i+j=n+1\}}$. Then $E^2=I_c$ and  $CE\in M_{d,c}(\R)$ has rank $c$ as well.
Due to \cite[Fact 5.15.9]{Bernstein2009} there exists a unique matrix $U'\in M_{d,c}(\R)$ with $(U')^{\mathsf{T}}U'=I_c$ and an upper triangular matrix $R\in M_{c,c}(\R)$
with positive diagonal entries so that $CE=U'R$. Hence, $C=(U'E)(ERE)=UL$ with $U=U'E$ and $L=ERE$. Moreover, $U^{\mathsf{T}}U=E^{\mathsf{T}}(U')^{\mathsf{T}}U'E=E^2=I_c$
and $L$ is positive lower triangular.\\
(b) \,
Clearly, there exists a $T'\in GL_c(\R)$ such that $CT'=:C'$ is a positive lower triangle matrix (cf. QR decomposition).
Due to (a) there exists a unique matrix $U\in M_{d,c}(\R)$ with $U^{\mathsf{T}}U=I_c$ and a lower triangular matrix $L\in GL_c(\R)$
    with positive diagonal entries so that $C'=UL$. Since both $C'$ and $L$ and hence, $L^{-1}$ are positive lower triangular necessarily $U=C'L^{-1}$ is positive lower triangular as well. Defining $T=L[T']^{-1}\in GL_c(\R)$ and $U_L=U$ we receive $C=U_LT$.
\end{proof}

\end{appendix}

\small

\end{document}